\documentclass{siamart190516}

\usepackage{graphicx} 
\usepackage{amsfonts,verbatim,cite,subcaption,tikz,float,multicol,url,booktabs,mathabx}
\usepackage[section]{placeins}
\usepackage[algo2e,ruled,linesnumbered]{algorithm2e}

\newcommand{\vs}{\textit{vs}}
\newcommand{\argmin}{\mathop{\arg\min}}

\newcommand{\Ttran}{\mathsf{T}}
\newcommand{\T}{\mathsf{T}}

\newcommand{\tol}{\mathsf{tol}}

\newcommand{\Tr}{\mathsf{Tr}}
\newcommand{\Rq}{\mathsf{Rq}}

\newcommand{\defi}{:=}

\newcommand{\normml}{\left\vert\kern-0.25ex\left\vert\kern-0.25ex\left\vert}  
\newcommand{\normmr}{\right\vert\kern-0.25ex\right\vert\kern-0.25ex\right\vert}
\newcommand{\order}{\mathcal{O}}
\newcommand{\fl}{\mathsf{fl}}
\newcommand{\lan}{\mathsf{Lan}}
\newcommand{\ks}{\mathsf{KS}}
\newcommand{\iter}{\mathsf{Iter}}
\newcommand{\lc}{\mathsf{LC}}
\newcommand{\comp}{\mathsf{Ortho}}

\newcommand{\compress}{N_\mathsf{comp}}

\newcommand{\R}{\mathbb{R}}

\newcommand{\range}{\mathsf{range}}

\newcommand{\gap}{\mathsf{relgap}}

\newcommand{\mpr}{\mathbf{u}}
\newcommand{\fig}{eps}

\newcommand{\figsizeD}{0.43\textwidth}
\newcommand{\figsizeT}{0.3\textwidth}
\newcommand{\figsizeF}{0.24\textwidth}

\providecommand{\spa}[1]{\mathsf{span}\{#1\}}

\providecommand{\diagm}{\mathrm{diag}}
\providecommand{\diagM}[1]{\mathrm{diag}(#1)}
\providecommand{\abs}[1]{\lvert#1\rvert}
\providecommand{\norm}[1]{\lVert#1\rVert}

\providecommand{\bignorm}[1]{\bigl\lVert#1\bigr\rVert}
\providecommand{\Bignorm}[1]{\Bigl\lVert#1\Bigr\rVert}

\newtheorem{remark}[theorem]{Remark}

\providecommand{\edit}[1]{\textcolor{black}{#1}}

\crefname{equation}{}{}
\crefname{Assumption}{Assumption}{Assumptions}
\crefname{algocf}{Algorithm}{Algorithms}

\begin{document}

\title{Lanczos with compression for symmetric eigenvalue problems}
 \author{
     Angelo A. Casulli\thanks{\href{https://orcid.org/0000-0003-4068-5502}{https://orcid.org/0000-0003-4068-5502}, 
         (\href{mailto:angelo.casulli@gssi.it}{angelo.casulli@gssi.it}).
     }\and
     Daniel Kressner\thanks{
         Institute of Mathematics, EPFL, Lausanne, Switzerland
         (\href{mailto:daniel.kressner@epfl.ch}{daniel.kressner@epfl.ch}, \href{mailto:nian.shao@epfl.ch}{nian.shao@epfl.ch}).
     }
     \and
     Nian Shao$^\dagger$
 }
 \headers{Lanczos with compression for Symmetric EVP}{Angelo A. Casulli,  Daniel Kressner, Nian Shao}

\maketitle
\begin{abstract}
    The Lanczos method with implicit restarting is one of the most popular methods for finding a few exterior eigenpairs of a large symmetric matrix $A$. Usually based on polynomial filtering, restarting is crucial to limit memory and the cost of orthogonalization. 
    In this work, we propose a novel strategy for the same purpose, called Lanczos with compression. 
    Unlike polynomial filtering, our approach compresses the Krylov subspace using rational approximation and, in doing so, it sacrifices the structure of the associated Krylov decomposition. Nevertheless, it remains compatible with subsequent Lanczos steps and the overall algorithm is still solely based on matrix-vector products with $A$.
    On the theoretical side, we show that compression introduces only a small error compared to standard (unrestarted) Lanczos and therefore has only a negligible impact on convergence. Comparable guarantees are not available for commonly used implicit restarting strategies, including the Krylov--Schur method. On the practical side, our 
    numerical experiments demonstrate \edit{the potential of compression to} outperform the Krylov--Schur method in terms of matrix-vector products.
\end{abstract}
\begin{keywords}
Lanczos method, implicit restarting, rational Krylov compression.
\end{keywords} 
\begin{AMS}
    65F15  
\end{AMS}

\section{Introduction}

This work is concerned with computing a few smallest or largest eigenvalues and the corresponding eigenvectors of a large symmetric matrix~$A$. 

Krylov subspace methods are among the most effective approaches for solving large-scale symmetric and nonsymmetric eigenvalue problems. Given an initial vector $q_1$, the $i$th Krylov subspace is generated by repeatedly multiplying a general square matrix $A$ with $q_1$:
\begin{equation*}
    \mathcal{K}_{i}(A,q_1) \defi \spa{q_1,Aq_1,\dotsc,A^{i-1}q_1}.
\end{equation*}
The Arnoldi process~\cite[Sec. 10.5]{Golub2013} is used to construct an orthonormal basis $Q_{i}$ for $\mathcal{K}_{i}(A,q_1)$. In turn, the Rayleigh--Ritz process is used for extracting eigenvalue and eigenvector approximations from $Q_i$, referred to as Ritz values and Ritz vectors.
A significant challenge of Krylov subspace methods arises from the need for storing $Q_i$. In the case of slow convergence (with respect to $i$), the available memory  may be exhausted long before achieving satisfactory approximation quality. Moreover, the cost of orthogonalizing $Q_i$ typically grows quadratically with $i$. 
Popular algorithms for solving large-scale eigenvalue problems, such as Sorensen's Implicitly Restarted Arnoldi (IRA) method~\cite{sorensen1992implicit},
address this challenge by combining the Arnoldi process with \emph{restarting}.
In this work, we propose \emph{compression} of Krylov subspaces as an alternative to restarting.
Explicit restarting uses the information gained from one cycle of the Arnoldi process to design an updated initial vector $p(A) q_1$ for the next Arnoldi cycle, with a (low-degree) polynomial $p$ that aims at filtering out the unwanted part of the spectrum. IRA is mathematically equivalent to explicit restarting, but it forms $p(A) q_1$ implicitly and performs several Arnoldi steps, without requiring any extra matrix-vector products.
Since its release in the late 1990s, the ARPACK software~\cite{lehoucq1998arpack} based on IRA has been one of the most widely used large-scale eigenvalue solvers. ARPACK is the basis of 
SciPy's and Julia's {\tt eigs} functions.
Often, the filter polynomial $p$ is chosen such its roots coincide with the unwanted Ritz values obtained after one Arnoldi cycle, a choice sometimes referred to as ``exact shifts''. In this case, IRA becomes mathematically equivalent to Stewart's Krylov--Schur method~\cite{stewart2002krylov}, which is simpler and numerically more reliable to implement than IRA, because it employs reordered Schur decompositions (instead of implicit QR steps) to perform restarts and deflations. The Krylov--Schur method is the basis of Matlab's 
{\tt eigs} function and it is implemented in SLEPc~\cite{hernandez2007parallel}
as well as Trillinos' Anasazi software~\cite{Baker2009}.

When $A$ is symmetric, the Arnoldi process simplifies to the Lanczos process~~\cite[Sec. 10.5]{Golub2013}. Restarting also simplifies; the IRA and Krylov--Schur methods become the IRL~\cite{calvetti1994implicitly} and thick-restarted Lanczos methods~\cite{wu2000thick}, respectively. During the last two decades, there
have been numerous further developments of restarted Lanczos methods, including extensions to block Lanczos methods~\cite{zhou2008block, baglama2003irbl}, adaptive strategies for selecting the number of Ritz vectors retained after each
restart~\cite{yamazaki2010adaptive}, strategies for choosing polynomial filters to compute interior eigenvalues~\cite{li2016thick, baglama1996iterative}, and communication-avoiding implementations for distributed-memory systems \cite{yamazaki2012communication}. In all these developments, the fundamental principle of using polynomial filtering for restarting has remained the same. 

From a theoretical perspective, the convergence analysis of (restarted) Arnoldi methods is by no means nontrivial. In the symmetric case, the convergence properties of the standard Lanczos method are relatively well understood; see, e.g.,~\cite{Kaniel1966,Saad1980, Meyer2024}. The convergence of restarted Arnoldi methods has been linked to polynomial approximation problems in~\cite{Beattie2005}, which allows to quantify convergence rates when using a fixed (well designed) filter polynomial $p$. In the case of exact shifts, the filter polynomial changes in every cycle and existing convergence results~\cite{aishima2015global,sorensen1992implicit} are of a qualitative nature, establishing global convergence without providing specific convergence rates.

As an alternative to restarting, this work proposes a novel compression strategy to limit the memory requirements of the Lanczos method for a symmetric matrix $A$. To provide some intuition behind our approach, suppose that the spectrum of $A$ is ordered such that
\begin{equation} \label{eq:ordereigenvalues}
\lambda_{1} < \dotsb < \lambda_{k} < \tau < 
\lambda_{k+1} \leq  \dotsb \leq  \lambda_{n},
\end{equation}
with a shift $\tau$ separating the smallest $k \ll n$ eigenvalues to be computed from the other eigenvalues. 
We first introduce a more general Krylov-like decomposition in \cref{sec:restart}, allowing us to compress the Lanczos process by an arbitrary orthonormal matrix.
Unlike restarting, compression sacrifices the Krylov structure but remains compatible with subsequent Lanczos steps.
Letting $U_{k}$ be an orthonormal basis of the eigenspace associated with the smallest $k$ eigenvalues, the best (but unrealistic) compression of an orthonormal basis $Q_{m}$ for $\mathcal{K}_{m}(A,q_{1})$ is given by $\range(Q_{m}Q_{m}^{\Ttran}U_{k})$, as it preserves all information relevant for (the unknown) $U_k$. 
Let $\chi_{\tau}(x)$ denote the step function 
\begin{equation*}
    \chi_{\tau}(x):= \begin{cases}
                  1 & \text{if } x < \tau, \\
                  0 & \text{otherwise.}
                 \end{cases}
\end{equation*}
As explained in \cref{sec:Vell}, a rational approximation to $\chi_{\tau}$ can be used to construct an orthonormal matrix that compresses $\mathcal{K}_{m}(A,q_{1})$ into a lower-dimensional subspace, while preserving nearly all essential information.
This parallels the developments from~\cite{casulli2024low}, which uses a rational approximation to a general function $f$ to compress the Lanczos process for approximating a matrix function $f(A)b$. To make the compression of the Lanczos method for eigenvalue problems practical, several additional ingredients are needed. In particular, reorthogonalization is less of a concern for matrix functions but of utmost importance for eigenvalue problems to, e.g., avoid so-called ghost eigenvalues.
As we will see in~\cref{sec:reorth}, full reorthogonalization alone is not sufficient to ensure stability in Lanczos with compression. To address this issue, we propose a new strategy called reorthogonalization with fill-in and establish a backward stability result, similar to the Krylov--Schur method in \cite{stewart2002krylov}.
In \cref{sec:convergence}, the convergence of our Lanczos with compression is analyzed. This is achieved by splitting the error between the Ritz values and the eigenvalues into two components: (1) the approximation error of standard Lanczos (without compression), and (2) the additional error incurred by compression. 
A back-of-the-envelope complexity analysis of the (re)orthogonalization cost for Lanczos with compression is also provided, and it reveals a clear advantage for problems that feature small relative eigenvalue gaps. 
In \cref{sec:numericalresults}, we report a series of numerical experiments involving matrices from a diversity of applications, \edit{illustrating the potential of our approach to outperform the Krylov--Schur method.} 

\begin{paragraph}{Notation} Throughout the paper, $\|\cdot\|$ denotes the Euclidean norm of a vector or the spectral norm of a matrix. An $n\times m$ matrix $Q$ is called orthonormal if its columns form an orthonormal basis, that is, $Q^\Ttran Q = I_m$ with the $m\times m$ identity matrix $I_m$.
\end{paragraph}

\section{Algorithm description}

In this section, we will derive our newly proposed Lanczos method with compression. For this purpose, 
 we will assume that the eigenvalues of the symmetric matrix $A\in\R^{n\times n}$ satisfy the (gapped) increased ordering~\eqref{eq:ordereigenvalues} and we aim at computing the smallest $k \ll n$ eigenvalues $\lambda_1,\ldots,\lambda_k$ as well as their associated eigenvectors. If the largest eigenvalues are of interest, one can simply replace $A$ by $-A$.

\subsection{Lanczos process and Krylov--Schur restarting}

We start by recalling the basic Lanczos process.
Given an initial vector $q_{1}$ with $\|q_1\| = 1$, $\lan \in \mathbb N$ iterations of the Lanczos process generate an orthonormal basis $Q_{\lan}$ for the Krylov subspace $\mathcal{K}_{\lan}(A,q_{1})$. More specifically, the Lanczos process returns the coefficients of the so-called \emph{Lanczos decomposition}:
\begin{equation}
    \label{eq:lanD}
    AQ_{\lan}
    = Q_{\lan}T_{\lan}+\beta_{\lan}q_{\lan+1}e_{\lan}^{\Ttran},
\end{equation}
where $\beta_{\lan} \in \R$, $Q_{\lan} = \begin{bmatrix}
        q_{1},\dotsc,q_{\lan}
    \end{bmatrix} \in \R^{n\times \lan}$, 
\begin{equation}
    \label{eq:lanD1}
    T_{\lan} = \begin{bmatrix}
        \alpha_{1} & \beta_{1} & &\\ 
        \beta_{1} & \ddots &\ddots&\\ 
        &\ddots &\ddots &\beta_{\lan-1}\\
        &&\beta_{\lan-1} &\alpha_{\lan} 
    \end{bmatrix}, \quad
    e_{\lan} = \begin{bmatrix}
    0 \\
    \vdots \\
    0 \\
    1
    \end{bmatrix} \in \R^{\lan}.
\end{equation}
Here and in the following, we will ignore the rare (fortunate) event that the Lanczos process breaks down within the first $\lan$ steps~\cite[Sec. 10.1.3]{Golub2013}. The vector $q_{\lan+1} \in \R^{n}$ is orthogonal to $q_{1},\dotsc,q_{\lan}$. In turn, it holds that $T_{\lan} = Q_{\lan}^{\Ttran}AQ_{\lan}$.
Thanks to the tridiagonal structure of $T_{\lan}$, it can be easily seen that the basis vectors admit the recurrence
\begin{equation}
    \label{eq:3recur}
    q_{i+1} = \frac{1}{\beta_{i}}\bigl(Aq_{i}-\alpha_{i}q_{i}-\beta_{i-1}q_{i-1}\bigr), \quad i = \edit{1,}2,\ldots,\lan.
\end{equation} 

In principle, the three-term recurrence~\eqref{eq:3recur} allows one to carry out the entire Lanczos process while storing only two additional vectors. However, in finite-precision arithmetic,  roundoff error incurs a significant loss of orthogonality, and repeated reorthogonalization is needed to ensure numerical orthogonality of $Q_{\lan}$. Reorthogonalization is commonly performed by orthogonalizing a freshly computed vector $q_{i+1}$ to \emph{all} previously computed vectors $q_1,\ldots,q_i$. In turn, reorthogonalization requires the storage of the entire basis $Q_{\lan}$ and an additional $\order(\lan^{2}\cdot n)$ operations.

As explained in the introduction, restarting is the standard approach to deal with the growing computational cost of the Lanczos process with reorthogonalization as $\lan$ increases. Krylov--Schur / thick restarting proceed by computing a spectral decomposition of $T_{\lan}$ and truncating it to the partial decomposition \[ T_{\lan}V_{\ks} = V_{\ks}\Lambda_{\ks}.\] Here, the diagonal matrix $\Lambda_{\ks}$ contains the smallest $\ks$ Ritz values, for some integer $\ks < \lan$, and $V_{\ks} \in \R^{\lan\times \ks}$ contains the corresponding (orthonormal) eigenvectors of $T_{\lan}$. 
Multiplication with $V_{\ks}$ transforms the Lanczos decomposition~\eqref{eq:lanD} into a smaller, so-called \emph{Krylov decomposition}:
\begin{equation}
    \label{eq:KrylovD}
    AQ_{\ks} = Q_{\ks}\Lambda_{\ks}+q_{\lan+1}b_{\ks}^{\Ttran},
\end{equation}
where $q_{\lan+1}$ is from the Lanczos decomposition \cref{eq:lanD},
$Q_{\ks} = Q_{\lan}V_{\ks}$, and $b_{\ks} = \beta_{\lan}V_{\ks}^{\Ttran}e_{\lan}$. 
By suitably adjusting the Lanczos process, one expands the basis $Q_{\ks}$ again to size $\lan$. One notable difference is that this destroys the tridiagonal structure of $T_{\lan}$ because the corresponding matrix contains the (usually full) vector $b_{\ks}$ in row and column $\ks+1$.

\subsection{Compression mechanism}
\label{sec:restart}

We will now describe a compression mechanism that is significantly more general than Krylov--Schur restarting. For this purpose,
we need to introduce the following, more general Krylov-like decomposition:
\begin{equation}
    \label{eq:LCD}
    AQ_{m}=Q_{m}T_{m}+q_{i+1}b_{m}^{\Ttran}+F_{m}
    \quad\text{and}\quad 
    Q_{m+1}^{\Ttran}F_{m}=0,
\end{equation}
where $Q_{m+1}\defi [Q_{m},q_{i+1}]$ has orthonormal columns, $T_{m}$ is symmetric (not necessarily tridiagonal) and $b_{m} \in \R^m$ is a general vector (not necessarily a scalar multiple of $e_m$).
Like in the Krylov decomposition \cref{eq:KrylovD}, $q_{i+1}$ is a vector from Lanczos process \cref{eq:3recur} with $i\geq m$. 
In contrast to existing Krylov decompositions, like~\eqref{eq:lanD} and~\eqref{eq:KrylovD}, we allow for the extra term $F_{m}$.
The condition $Q_{m+1}^{\Ttran}F_{m}=0$ ensures
\begin{equation*}
    Q_{m}^{\Ttran}AQ_{m} = Q_{m}^{\Ttran} Q_{m} T_{m} +Q_{m}^{\Ttran}q_{i+1}b_{m}^{\Ttran}+Q_{m}^{\Ttran}F_{m} = T_m.
\end{equation*} 
Hence, one can still extract Ritz values/vectors from the spectral decomposition of $T_m$.
The following theorem demonstrates a key advantage of~\eqref{eq:LCD}: Compressing $Q_{m}$ to $\ell < m$ columns by multiplying with an \emph{arbitrary} orthonormal matrix $V_{\ell}$ preserves the structure of \cref{eq:LCD}. 
\begin{theorem}
    \label{thmLan}
    Consider the Krylov-like decomposition \cref{eq:LCD} for a symmetric matrix $A\in\R^{n\times n}$.
    Given an orthonormal matrix $V_{\ell} \in \R^{m \times \ell}$, with $\ell< m$,
    set $\widetilde{Q}_{\ell} := Q_{m}V_{\ell} \in \R^{n\times \ell}$. Then
    \begin{equation*}
        A\widetilde{Q}_{\ell}=\widetilde{Q}_{\ell}
        \widetilde{T}_{\ell}
        +q_{i+1}\widetilde{b}_{\ell}^{\Ttran}
        +\widetilde{F}_{\ell} \quad \text{and}\quad 
        \widetilde{Q}_{\ell+1}^{\Ttran}\widetilde{F}_{\ell}=0,
    \end{equation*}
    where
    \begin{equation*}
        \widetilde{Q}_{\ell+1}=[\widetilde{Q}_{\ell},q_{i+1}], \quad \widetilde{T}_{\ell} = V_{\ell}^{\Ttran}T_{m}V_{\ell},\quad
        \widetilde{b}_{\ell}=V_{\ell}^{\Ttran}b_{m}, \quad \widetilde{F}_{\ell} = Q_{m}(I-V_{\ell}V_{\ell}^{\Ttran})T_{m}V_{\ell}+F_{m}V_{\ell}.
    \end{equation*} 
\end{theorem}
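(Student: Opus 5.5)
The plan is to verify both claims by direct computation, starting from \cref{eq:LCD}. First I will establish the decomposition identity. Right-multiplying $AQ_{m}=Q_{m}T_{m}+q_{i+1}b_{m}^{\Ttran}+F_{m}$ by $V_{\ell}$ gives
\begin{equation*}
A\widetilde{Q}_{\ell}=Q_{m}T_{m}V_{\ell}+q_{i+1}b_{m}^{\Ttran}V_{\ell}+F_{m}V_{\ell}.
\end{equation*}
The middle term is $q_{i+1}\widetilde{b}_{\ell}^{\Ttran}$ by definition of $\widetilde{b}_{\ell}$. For the first term I insert the splitting $I=V_{\ell}V_{\ell}^{\Ttran}+(I-V_{\ell}V_{\ell}^{\Ttran})$ in front of $T_{m}V_{\ell}$. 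This gives
\begin{equation*}
Q_{m}T_{m}V_{\ell}=Q_{m}V_{\ell}\,V_{\ell}^{\Ttran}T_{m}V_{\ell}+Q_{m}(I-V_{\ell}V_{\ell}^{\Ttran})T_{m}V_{\ell}=\widetilde{Q}_{\ell}\widetilde{T}_{\ell}+Q_{m}(I-V_{\ell}V_{\ell}^{\Ttran})T_{m}V_{\ell}.
\end{equation*}
Collecting the remainder with $F_{m}V_{\ell}$ yields exactly $\widetilde{F}_{\ell}$. I will also note in passing that $\widetilde{T}_{\ell}$ is symmetric, and that $\widetilde{Q}_{\ell+1}$ is orthonormal. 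The latter holds because $\widetilde{Q}_{\ell}^{\Ttran}\widetilde{Q}_{\ell}=V_{\ell}^{\Ttran}V_{\ell}=I_{\ell}$ and $Q_{m}^{\Ttran}q_{i+1}=0$. Together these show the new decomposition has the same form as \cref{eq:LCD}.

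Second, I will verify the orthogonality condition $\widetilde{Q}_{\ell+1}^{\Ttran}\widetilde{F}_{\ell}=0$ block by block.

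For the block $\widetilde{Q}_{\ell}^{\Ttran}\widetilde{F}_{\ell}=V_{\ell}^{\Ttran}Q_{m}^{\Ttran}\widetilde{F}_{\ell}$:
\begin{itemize}
\item Using $Q_{m}^{\Ttran}Q_{m}=I_{m}$, the first part of $\widetilde{F}_{\ell}$ contributes $V_{\ell}^{\Ttran}(I-V_{\ell}V_{\ell}^{\Ttran})T_{m}V_{\ell}$. This vanishes because $V_{\ell}^{\Ttran}V_{\ell}=I_{\ell}$ makes $V_{\ell}^{\Ttran}(I-V_{\ell}V_{\ell}^{\Ttran})=0$.
\item The second part contributes $V_{\ell}^{\Ttran}Q_{m}^{\Ttran}F_{m}V_{\ell}=0$, since $Q_{m+1}^{\Ttran}F_{m}=0$ implies $Q_{m}^{\Ttran}F_{m}=0$.
\end{itemize}

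For the block $q_{i+1}^{\Ttran}\widetilde{F}_{\ell}$:
\begin{itemize}
\item The first part vanishes because $q_{i+1}^{\Ttran}Q_{m}=0$, by orthonormality of $Q_{m+1}$.
\item The second part vanishes because $q_{i+1}^{\Ttran}F_{m}=0$, again from $Q_{m+1}^{\Ttran}F_{m}=0$.
\end{itemize}

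There is no real obstacle; the argument is purely algebraic. The only point needing care is the choice of splitting in the first step. It has to be arranged so that the leftover term lies in $\range(Q_{m})$ and is annihilated by the compressed basis $Q_{m}V_{\ell}$. This works because $(I-V_{\ell}V_{\ell}^{\Ttran})$ projects onto the orthogonal complement of $\range(V_{\ell})$.
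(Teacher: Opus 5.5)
Your proof is correct and takes the same route as the paper. You right-multiply \cref{eq:LCD} by $V_{\ell}$, split $Q_{m}T_{m}V_{\ell}$ via $I=V_{\ell}V_{\ell}^{\Ttran}+(I-V_{\ell}V_{\ell}^{\Ttran})$, and check $\widetilde{Q}_{\ell}^{\Ttran}\widetilde{F}_{\ell}=0$ and $q_{i+1}^{\Ttran}\widetilde{F}_{\ell}=0$ using $V_{\ell}^{\Ttran}(I-V_{\ell}V_{\ell}^{\Ttran})=0$, $Q_{m}^{\Ttran}q_{i+1}=0$ and $Q_{m+1}^{\Ttran}F_{m}=0$; your side remarks on the symmetry of $\widetilde{T}_{\ell}$ and the orthonormality of $\widetilde{Q}_{\ell+1}$ are correct details the paper leaves implicit.
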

\begin{proof}
    By multiplying the decomposition~\cref{eq:LCD} with $V_\ell$, we obtain
    \begin{equation*}
        \begin{aligned}
            A\widetilde{Q}_{\ell} &= AQ_{m}V_{\ell}=Q_{m}T_{m}V_{\ell}+q_{i+1}b_{m}^{\Ttran}V_{\ell}+F_{m}V_{\ell}\\ 
            &=Q_{m}V_{\ell}(V_{\ell}^{\Ttran}T_{m}V_{\ell})+q_{i+1}(V_{\ell}^{\Ttran}b_{m})^{\Ttran}+Q_{m}(I-V_{\ell}V_{\ell}^{\Ttran})T_{m}V_{\ell}+F_{m}V_{\ell}\\ 
            &=\widetilde{Q}_{\ell}\widetilde{T}_{\ell}+q_{i+1}\widetilde{b}_{\ell}^{\Ttran}+\widetilde{F}_{\ell},
        \end{aligned}
    \end{equation*}
    where 
    \begin{equation*}
        \widetilde{Q}_{\ell}^{\Ttran}\widetilde{F}_{\ell} =
            V_{\ell}^{\Ttran}Q_{m}^{\Ttran}\bigl(Q_{m}(I-V_{\ell}V_{\ell}^{\Ttran})T_{m}V_{\ell}+F_{m}V_{\ell}\bigr)=0
            \quad\text{and}\quad q_{i+1}^{\Ttran}\widetilde{F}_{\ell}=0.
    \end{equation*}
\end{proof}

\edit{Note that the algorithms developed in this paper never form or store $F_m$ explicitly. In fact,
\eqref{eq:LCD} can equivalently be characterized without introducing $F_m$:
the difference between $AQ_{m}$ and $Q_{m}T_{m}+q_{i+1}b_{m}^{\Ttran}$ is orthogonal to the range of $Q_{m+1}$.
Also,}
note that \cref{thmLan} contains Krylov--Schur restarting as a special case; if $F_{m} = 0$ and $V_\ell$ consists of eigenvectors of $T_m$ then $\widetilde{F}_\ell = 0$. 

Given a Krylov-like decomposition~\cref{eq:LCD}, the Lanczos process can be continued in a manner similar to the Krylov--Schur method \cite[Eq.~(2.3)]{wu2000thick}, yielding the next Lanczos vector
\begin{equation}
    \label{eq:qk2}
    q_{i+2} \defi \beta_{i+1}^{-1}(I-Q_{m+1}Q_{m+1}^{\Ttran})Aq_{i+1},    
\end{equation}
where $\beta_{i+1}$ is a normalization factor such that $\norm{q_{i+2}}=1$. The same procedure can be applied to continue the 
compressed decomposition described in \cref{thmLan}. Specifically, we first set $\widetilde{q}_{\ell+1}=q_{i+1}$, then compute $A\widetilde{q}_{\ell+1}$ and orthogonalize it with respect to $\widetilde{Q}_{\ell+1}$:
\begin{equation}
    \label{eq:ql2}
    \widetilde{q}_{\ell+2} \defi \widetilde{\beta}_{\ell+1}^{-1}(I-\widetilde{Q}_{\ell+1}\widetilde{Q}_{\ell+1}^{\Ttran})A\widetilde{q}_{\ell+1},    
\end{equation}
where $\widetilde{\beta}_{\ell+1}$ ensures $\norm{\widetilde{q}_{\ell+2}}=1$. With $\widetilde{q}_{\ell+1}$ and $\widetilde{q}_{\ell+2}$ being defined, the Lanczos process now proceeds in the usual manner; via the three-term recurrence~\cref{eq:3recur}. 
From a theoretical and practical point of view it is important that the compression does \emph{not} affect the future vectors generated by the Lanczos process, which is ensured when $\widetilde{q}_{\ell+2}=q_{i+2}$. The following proposition shows that this is indeed the case under the additional, relatively mild condition $b_{m}\in \range(V_{\ell})$. Interestingly, standard Krylov--Schur restarting generally does not satisfy this condition and does not preserve the subsequent Lanczos process\edit{, that is, $\widetilde{q}_{\ell+2}\neq q_{i+2}$}.

\begin{proposition}
    \label{prop:compression}
    Given the Krylov-like decomposition \cref{eq:LCD}, let $q_{i+2}$ and $\widetilde{q}_{\ell+2}$ denote the next Lanczos vectors obtained without and with compression using $V_{\ell}$, as defined in \cref{eq:qk2} and \cref{eq:ql2}, respectively. Assume that $V_{\ell}V_{\ell}^{\Ttran}b_{m}=b_{m}$. Then, up to a sign change, $\widetilde{\beta}_{\ell+1}=\beta_{i+1}$ and $\widetilde{q}_{\ell+2} = q_{i+2}$.
\end{proposition}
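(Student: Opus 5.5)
The strategy is to show that the two unnormalized vectors $(I-Q_{m+1}Q_{m+1}^{\Ttran})Aq_{i+1}$ and $(I-\widetilde{Q}_{\ell+1}\widetilde{Q}_{\ell+1}^{\Ttran})Aq_{i+1}$ coincide. The claims about $\widetilde{\beta}_{\ell+1}$ and $\widetilde{q}_{\ell+2}$ then follow at once by normalization; the ``up to a sign'' only reflects the sign convention chosen for the normalizing factor. Since $q_{i+1}$ is a column of both $Q_{m+1}$ and $\widetilde{Q}_{\ell+1}$, and $q_{i+1}\perp \range(Q_m)\supseteq\range(\widetilde Q_\ell)$, both projectors split orthogonally:
\[
I-Q_{m+1}Q_{m+1}^{\Ttran}=I-Q_mQ_m^{\Ttran}-q_{i+1}q_{i+1}^{\Ttran},\qquad
I-\widetilde{Q}_{\ell+1}\widetilde{Q}_{\ell+1}^{\Ttran}=I-Q_mV_\ell V_\ell^{\Ttran}Q_m^{\Ttran}-q_{i+1}q_{i+1}^{\Ttran}.
\]
Here we use $\widetilde Q_\ell=Q_mV_\ell$ and the orthonormality of $V_\ell$. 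Taking the difference, it suffices to prove
\[
Q_m(I-V_\ell V_\ell^{\Ttran})Q_m^{\Ttran}Aq_{i+1}=0 .
\]

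The key step is to compute $Q_m^{\Ttran}Aq_{i+1}$ from the decomposition \cref{eq:LCD}, using the symmetry of $A$. Transposing \cref{eq:LCD} gives $Q_m^{\Ttran}A = T_mQ_m^{\Ttran} + b_m q_{i+1}^{\Ttran} + F_m^{\Ttran}$, because $T_m$ is symmetric. Applying this to $q_{i+1}$, we get $Q_m^{\Ttran}q_{i+1}=0$ and $q_{i+1}^{\Ttran}q_{i+1}=1$. Moreover $F_m^{\Ttran}q_{i+1}=0$, which follows from $Q_{m+1}^{\Ttran}F_m=0$ since $q_{i+1}$ is the last column of $Q_{m+1}$. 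Hence $Q_m^{\Ttran}Aq_{i+1}=b_m$.

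With this, the required identity reduces to $Q_m(I-V_\ell V_\ell^{\Ttran})b_m=0$, which is exactly the hypothesis $V_\ell V_\ell^{\Ttran}b_m=b_m$. So the two unnormalized vectors agree, their norms agree, and therefore $\widetilde\beta_{\ell+1}=\beta_{i+1}$ and $\widetilde q_{\ell+2}=q_{i+2}$.

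I do not expect a real obstacle here. The only point needing care is the identification $Q_m^{\Ttran}Aq_{i+1}=b_m$. It relies on both the symmetry of $A$ and $T_m$ and on the orthogonality condition satisfied by $F_m$, and in particular on $F_m$ being orthogonal to $q_{i+1}$ and not only to $Q_m$.
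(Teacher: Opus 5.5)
Your proof is correct and follows essentially the same route as the paper. The paper also identifies $b_m = Q_m^{\Ttran}Aq_{i+1}$ from \cref{eq:LCD}, by left-multiplying with $q_{i+1}^{\Ttran}$ and using the symmetry of $A$, and then uses $V_\ell V_\ell^{\Ttran}b_m=b_m$ to replace $Q_mQ_m^{\Ttran}Aq_{i+1}$ by $\widetilde Q_\ell\widetilde Q_\ell^{\Ttran}Aq_{i+1}$ in the split projector; the symmetry of $T_m$ you invoke is not actually needed, since $Q_m^{\Ttran}q_{i+1}=0$ annihilates that term.
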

\begin{proof}
    Multiplying $q_{i+1}^{\Ttran}$ from the left to \cref{eq:LCD} gives 
    $b_{m}=Q_{m}^{\Ttran}Aq_{i+1}$. Because of $V_{\ell}V_{\ell}^{\Ttran}b_{m}=b_{m}$, we have
    \begin{equation*}
        Q_{m}Q_{m}^{\Ttran}Aq_{i+1} = Q_{m}b_{m} = Q_{m}V_{\ell}V_{\ell}^{\Ttran}b_{m} = Q_{m}V_{\ell}V_{\ell}^{\Ttran}Q_{m}^{\Ttran}Aq_{i+1} = \widetilde{Q}_{\ell}\widetilde{Q}_{\ell}^{\Ttran}Aq_{i+1}.
    \end{equation*}
    In turn,
    \begin{equation*}
        \beta_{i+1}q_{i+2} = (I-Q_{m}Q_{m}^{\Ttran}-q_{i+1}q_{i+1}^{\Ttran})Aq_{i+1}  
        = (I-\widetilde{Q}_{\ell}\widetilde{Q}_{\ell}^{\Ttran}-q_{i+1}q_{i+1}^{\Ttran})Aq_{i+1} 
        =  \widetilde{\beta}_{\ell+1}\widetilde{q}_{\ell+2},
    \end{equation*}
    which implies the desired result.
\end{proof}

Our algorithm proceeds by cyclically alternating between expansion and compression. Specifically, the compressed basis ${Q}^{(1)}_{\ell} := \widetilde{Q}_{\ell}$, together with $q_{m+1}$, is expanded to an orthonormal basis of size $m+1$ by performing another $m-\ell$ steps of Lanczos process. 
The first $m$ vectors of the expanded basis are compressed again to a basis ${Q}^{(2)}_{\ell}$ of size $\ell$, expanded again, and so on, yielding the following procedure:
\begin{equation}
    \label{eq:compression}
    \begin{aligned}
        &q_{1}\\ 
        \text{Lanczos}\quad&\Downarrow\\ 
        \underline{q_{1},\dotsc,q_{m}},&q_{m+1}\\ 
        \text{compression}\quad&\Downarrow\\ 
        q_{1}^{(1)},\dotsc,q_{\ell}^{(1)},&q_{m+1}\\ 
        \text{Lanczos}\quad&\Downarrow\\ 
        \underline{q_{1}^{(1)},\dotsc,q_{\ell}^{(1)},q_{m+1},\dotsc,q_{2m-\ell}},&q_{2m-\ell+1}\\ 
        \text{compression}\quad&\Downarrow\\
        q_{1}^{(2)},\dotsc,q_{\ell}^{(2)},&q_{2m-\ell+1}\\ 
        \text{Lanczos}\quad&\Downarrow\\ 
        \underline{q_{1}^{(2)},\dotsc,q_{\ell}^{(2)},q_{2m-\ell+1},\dotsc,q_{3m-2\ell}},&q_{3m-2\ell+1}\\ 
        \text{compression}\quad&\Downarrow\\
        \cdots & \cdots
    \end{aligned}
\end{equation}

Each step of the procedure~\eqref{eq:compression} is associated with a Krylov-like decomposition~\cref{eq:LCD}.
After the first round of Lanczos, one obtains a Lanczos decomposition~\cref{eq:lanD} with $\lan = m$.
To satisfy the requirements of~\cref{prop:compression} and preserve subsequent Lanczos vectors, the vector $e_{m}$ needs to be contained in the range of the compression matrix $V_{\ell}^{(0)}$ that is used for performing the first compression. We will explain in \cref{sec:Vell} how this is achieved. 
By a change of basis, we may assume without loss of generality  
that $\big( V_{\ell}^{(0)}\big)^{\Ttran}e_{m}=e_{\ell}$.
According to~\cref{thmLan}, the Krylov-like decomposition after the first compression takes the form
\begin{equation}
    \label{eq:Tcompression}
    AQ_{\ell}^{(1)} = Q_{\ell}^{(1)}T_{\ell}^{(1)}+\beta_{m}q_{m+1}e_{\ell}^{\Ttran}+F_{\ell}^{(1)}
    \quad\text{and}\quad  \big[ Q_{\ell}^{(1)}, q_{m+1}  \big]^\Ttran F_{\ell}^{(1)} = 0,
\end{equation}
where $T^{(1)}_{\ell} = \big( V^{(0)}_{\ell} \big)^{\Ttran}T_{m}V^{(0)}_{\ell}$.
The second round of Lanczos process expands this into a Krylov-like decomposition of the form 
\begin{equation*}
    AQ_{m}^{(1)} = Q_{m}^{(1)}T_{m}^{(1)}+\beta_{2m-\ell}q_{2m-\ell+1}e_{m}^{\Ttran}+F_{m}^{(1)},
\end{equation*}
where 
\begin{equation*}
    T_{m}^{(1)} = \begin{bmatrix}
        T_{\ell}^{(1)} & \beta_{m}e_{\ell} & &\\ 
        \beta_{m}e_{\ell}^{\Ttran} & \alpha_{m+1} & \beta_{m+1} & \\ 
        & \beta_{m+1} & \ddots &\ddots& \\
        &  & \ddots &\ddots& \beta_{2m-1-\ell}\\
        && &\beta_{2m-1-\ell} & \alpha_{2m-\ell}
    \end{bmatrix}
    \quad\text{and}\quad 
    F_{m}^{(1)} = \begin{bmatrix}
        F_{\ell}^{(1)} \quad  0_{m-\ell}    
    \end{bmatrix}.
\end{equation*} 
This process of compressing and expanding Krylov-like decompositions repeats in an analogous fashion for the subsequent cycles of~\cref{eq:compression}.

\begin{remark}
    Given a Krylov-like decomposition~\cref{eq:LCD}, the range of $Q_{m}$ is a Krylov subspace for $A$ if and only if $(I-Q_{m}Q_{m}^{\Ttran})AQ_{m}=q_{i+1}b_{m}^{\Ttran}+F_{m}$ has rank one~\cite[Chap. 5, Thm. 1.1]{Stewart2001}.
    In our algorithm, compression yields a matrix $F_m$ that generally increases the rank and, hence, it destroys the Krylov structure of the subspace. Thus, compression fundamentally differs from (polynomial) restarting, which preserves the Krylov structure but alters the Lanczos sequence.
\end{remark}

\subsection{Construction of compression matrix $V_{\ell}$}
\label{sec:Vell}

We now turn to the choice of the orthonormal matrix that defines the compression mechanism described above. We begin by considering an idealized scenario in which compression preserves all relevant information. Building on techniques similar to those in \cite{casulli2024low}, we then demonstrate that a practical rational Krylov compression can retain most of the essential information.
For simplicity, we only discuss the first compression step $V_{\ell} \equiv V^{(0)}_{\ell}$; all subsequent compression steps are performed in an entirely analogous manner.

\begin{paragraph}{Idealized compression}
\emph{Ideally}, compression preserves
the components of the $k\le \ell$ (as yet unknown) eigenvectors of interest for $A$ in the currently computed basis. To specify such an idealized choice,
let $Q_{n}\in \R^{n \times n}$ be the orthogonal matrix obtained  after performing \emph{all} $n$ steps of the (uncompressed) Lanczos process applied to $A$, and define $T_{n}=Q_{n}^{\Ttran}AQ_{n}$.
Then $T_n$ has the same eigenvalues $\lambda_1,\ldots,\lambda_n$ as $A$. Letting $w_1,\ldots w_n$ denote an orthonormal basis of eigenvectors such that $T_{n} w_i = \lambda_i w_i$, we set
\[
 W_{k}=[w_{1},\dotsc,w_{k}].
\]
Note that $\range(Q_{n}W_{k})$ is precisely the subspace containing the $k$ desired  eigenvectors of $A$. We now consider $k< m\leq n$, and
recall that the first $m$ steps of the Lanczos process produce a basis $Q_{m}$ that constitutes the first $m$ columns of $Q_{n}$. This allows us to express
\[
 Q_{n}W_{k} = \begin{bmatrix}
               Q_m & \star\ 
              \end{bmatrix}
\begin{bmatrix}
               \overline{W}_{k} \\ \star
\end{bmatrix},
\]
where $\overline{W}_{k}\in\R^{m\times k}$ contains the first $m$ rows $W_{k}$.
In particular, the compression $Q_{m} \overline{W}_{k}$ of $Q_{m}$ perfectly preserves all information on the eigenvectors of interest.
This, of course, represents an unrealistic choice because $\overline{W}_{k}$ is unknown after the first $m$ steps.
\end{paragraph}

\begin{paragraph}{Injection of rational function}

We aim at using rational approximation to produce a slightly larger compression matrix $V_{\ell} \in \R^{m\times \ell}$, such that $\range(V_{\ell})$ contains the columns of the idealized choice $\overline{W}_{k}$ approximately. For this purpose, we will inject a real rational function $r = p/q$ with polynomials $p,q$. The (extended) list of poles of $r$ is denoted by
\[
 \Xi \defi \big[\xi_{1},\dotsc,\xi_{\abs{\Xi}}\big], \quad \xi_i \in \mathbb C \cup \{\infty\},
\]
where $\Xi$ contains the zeros of $q$ (including their multiplicities), supplemented by \edit{$\deg(p) - \deg(q)$} infinite poles if \edit{$\deg(p) > \deg(q)$.} The cardinality of $\Xi$ thus satisfies \edit{$\abs{\Xi} \defi \max\{\deg(p), \deg(q)\}+1$.} Given such a list of poles $\Xi$, we will make use of the 
rational Krylov subspace
\begin{equation}
    \mathcal{Q}(T_{m},e_m,\Xi)  \defi \bigl\{ r(T_m)e_m \colon \text{$r$ is a rational function with poles $\Xi$} \bigr\}   \label{eq:ratkrylov}
\end{equation}
for a general symmetric $m\times m$ matrix $T_m$.
As long as $\abs{\Xi} \le m$, the dimension of $\mathcal{Q}(T_{m},e_m,\Xi)$ is generically $\abs{\Xi}$, which will be assumed in the following.

The following lemma is crucial to our construction; it links matrix-vector products with $r(T_{n})$ for the (unknown) matrix $T_n$ to 
matrix-vector products with $r(T_{m})$ for a submatrix $T_m$ of $T_n$.
\begin{lemma}\label{thm:BKCS}
   Consider a symmetric $n\times n$
   matrix of the form
      \begin{equation*}
         T_{n}=\begin{bmatrix}
             T_{m}&\\& \star
         \end{bmatrix} + \beta \begin{bmatrix}
             &e_me_1^{\T}\\e_1e_m^{\T} &
         \end{bmatrix}\in \R^{n\times n},
     \end{equation*}
     with $T_{m} \in \R^{m \times m}$ and $\beta \in \R$.
     Let $r = p/q$ be a real rational function with poles $\Xi$ as defined above, and suppose that the eigenvalues
     of $T_m$ and $T_n$ are not zeros of $q$. Then, for any vector $v\in \R^{n}$, it holds that
     \begin{equation*}
         r(T_{n})  v \in
         \begin{bmatrix}
            r(T_{m})  \overline{v} + \mathcal{Q}(T_{m},e_m,\Xi) \\
              \R^{n-m}
         \end{bmatrix},
     \end{equation*}
     where $\overline{v}\in \R^m$ contains the first $m$ components of $v$, and $\mathcal{Q}(T_{m},e_m,\Xi)\subset \R^{m}$ denotes the rational Krylov subspace~\eqref{eq:ratkrylov}. 
\end{lemma}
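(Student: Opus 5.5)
We reduce the statement to elementary rational functions and then use linearity. Write $T_n = D + \beta E$ with $D = \mathrm{diag}(T_m, T_\star)$, where $T_\star$ is the trailing block, and $E = e_m e_{m+1}^{\T} + e_{m+1} e_m^{\T}$ in $n$-dimensional indexing. Only entries $(m,m+1)$ and $(m+1,m)$ couple the two blocks. Let $P$ denote the projection onto the first $m$ components. By partial fractions, $r$ is a linear combination of a polynomial of degree at most the number of infinite poles in $\Xi$, plus terms $(x-\xi)^{-j}$ where $\xi$ ranges over the finite poles and $j$ over their multiplicities.

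The same decomposition describes the rational Krylov space. For $\Xi$ with $d$ infinite poles and finite poles of given multiplicities, $\mathcal{Q}(T_m,e_m,\Xi)$ is spanned by the vectors $T_m^s e_m$ for $s\le d-1$ and $(T_m-\xi I)^{-j} e_m$ for $j\le$ the multiplicity of $\xi$. One must check that these are exactly the admissible degrees under the paper's convention $\abs{\Xi} = \max\{\deg p,\deg q\}+1$: the $+1$ accounts for a constant, or equivalently one infinite pole is always present. Complex conjugate poles pair to give real vectors, but it suffices to work over $\mathbb{C}$ and take real parts, since $r$ is real. By linearity, it therefore suffices to prove
\[
P\,g(T_n)v - g(T_m)\overline{v} \in \mathcal{Q}(T_m,e_m,\Xi)
\]
for each elementary term $g$ that appears.

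\emph{Polynomial part.} For $g(x) = x^s$, we use induction on $s$. Write $T_n^{s}v = D\,T_n^{s-1}v + \beta E\,T_n^{s-1}v$. Applying $P$ and the block structure of $D$ gives
\[
P T_n^{s} v = T_m P T_n^{s-1} v + \beta\, c\, e_m,
\]
where $c$ is the $(m+1)$st entry of $T_n^{s-1}v$. Unrolling the recursion yields $P T_n^s v = T_m^s\overline{v} + \spa{e_m, T_m e_m, \dots, T_m^{s-1}e_m}$. This is a Krylov space whose degree matches the number of infinite poles.

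\emph{Resolvent part.} For $g(x) = (x-\xi)^{-1}$, set $y = (T_n - \xi I)^{-1}v$, so that $(D - \xi I)y = v - \beta E y$. Taking the first block row gives
\[
(T_m - \xi I)\overline{y} = \overline{v} - \beta\, y_{m+1} e_m,
\]
hence $\overline{y} = (T_m-\xi I)^{-1}\overline{v} - \beta y_{m+1}(T_m-\xi I)^{-1}e_m$. This step uses that $\xi$ is not an eigenvalue of $T_m$ or $T_n$. For higher powers $(x-\xi)^{-j}$, iterate with $v$ replaced by $(T_n-\xi I)^{-(j-1)}v$. Inductively this produces $(T_m-\xi I)^{-j}\overline{v}$ plus a combination of $(T_m-\xi I)^{-i}e_m$ for $i\le j$, all of which lie in $\mathcal{Q}$.

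The main obstacle is bookkeeping rather than a conceptual difficulty: the degree conventions for $\Xi$ and the claimed dimension $\abs{\Xi}$ of $\mathcal{Q}$ must line up exactly with the error terms produced in the two cases. Summing the elementary contributions then gives the claim, and the trailing $n-m$ components are left unconstrained.
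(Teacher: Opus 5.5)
Your proof is correct, but it takes a different route from the paper. The paper gives no argument of its own: it treats the lemma as an immediate consequence of an explicit formula for $r(T_n)v$ from earlier work \cite[Cor.~2.6]{casulli2024low}, and inherits both the structural argument and the pole-counting conventions from there. You derive the inclusion from first principles. You write $T_n=D+\beta E$ and use that $PEx=x_{m+1}e_m$ for every $x$. You then expand $r$ in partial fractions and handle monomials and powers of shifted inverses separately by induction. This gives an elementary, self-contained proof. It shows exactly where the spectral hypothesis enters, namely invertibility of $T_m-\xi I$ and $T_n-\xi I$ in the resolvent step. It also shows why the correction space is generated by the single vector $e_m$ rather than by a block of vectors. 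The bookkeeping you flag as the main obstacle does close, with room to spare, and only containment is needed, not the dimension count. Write every correction over the common denominator $q$. The polynomial part contributes $T_m^s e_m=T_m^s q(T_m)\,q(T_m)^{-1}e_m$ with $s\le\deg p-\deg q-1$. The resolvent terms give $(T_m-\xi I)^{-i}e_m=\widetilde{p}_i(T_m)q(T_m)^{-1}e_m$ with $\deg\widetilde{p}_i=\deg q-i$. So every correction has the form $\widetilde{p}(T_m)q(T_m)^{-1}e_m$ with $\deg\widetilde{p}\le\max\{\deg p,\deg q\}-1$, which lies in $\mathcal{Q}(T_m,e_m,\Xi)$ under any reasonable reading of the convention $\abs{\Xi}=\max\{\deg p,\deg q\}+1$. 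Passing to real parts at the end is harmless, since the left-hand side is real and $q$ has real coefficients.
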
   
\begin{proof}
    The result of this lemma is an immediate consequence of \cite[Cor.~2.6]{casulli2024low}, which provides an explicit formula for $r(T_{n})v$.
\end{proof}
\begin{remark}
    Note that \cref{thm:BKCS} does not impose tridiagonal structure on the matrix $T_{m}$. Indeed, this structure is destroyed in subsequent compression steps, but \cref{thm:BKCS}, along with the strategy described below for determining the compression matrix $V_\ell$, remain applicable also after compression has been performed. 
\end{remark}

We come back to our goal of replacing the idealized compression matrix $\overline{W}_{k} = [ \overline{w}_{1}, \ldots, \overline{w}_{k} ]$, where each 
$\overline{w}_{i}$ contains the first $m$ components of the 
eigenvector $w_{i}$ of $T_{n}$. Applying \cref{thm:BKCS} with $v = w_i$ and additionally assuming that $r(\lambda_{i})\neq 0$, it follows that
\begin{equation} \label{eq:formulabarwi}
    \overline{w}_{i} \in \frac{1}{r(\lambda_{i})} r(T_{m})\overline{w}_{i} + \mathcal{Q}(T_{m},e_m,\Xi).
\end{equation}
This inclusion is not (yet) very helpful because the first term $r(T_{m})\overline{w}_{i}$ involves the unknown vector $\overline{w}_{i}$.
However, we will see that for particular choices of $r$, this term can be well approximated in a lower-dimensional subspace.
\end{paragraph}

\begin{paragraph}{Choice of rational function $r$}
 Letting $\theta_1 \leq \dotsb \leq \theta_m$ denote the eigenvalues of $T_{m}$ with associated normalized eigenvectors $s_{1},\dotsc,s_{m}$, suppose that
\begin{equation}
    \label{eqn:relaxed-condition_rational}
    \begin{aligned}
        \abs{r(\theta_i)-1}< \tol_{\mathrm{ra}}& \quad \text{for } i = 1, \dotsc, k, \\ 
        \abs{r(\theta_i)} < \tol_{\mathrm{ra}}& \quad \text{for }i = k+1, \dotsc, m,
    \end{aligned}         
\end{equation}
for some prescribed tolerance $0<\tol_{\mathrm{ra}} \ll 1$. Then $r(T_{m})$ is close to the spectral projector associated with the first $k$ eigenvectors of $T_{m}$ and, in turn, $r(T_{m})\overline{w}_{i}$ can be well approximated by a vector in \edit{the invariant subspace} $\spa{s_1, \dotsc, s_k}$.
Combined with~\eqref{eq:formulabarwi}, this implies that the range of 
$\overline{W}_{k}$ is well approximated by the subspace
\begin{equation}
    \label{eq:defVell}
    \range(V_{\ell}) = \spa{s_{1},\dotsc,s_{k}}+\mathcal{Q}(T_{m},e_m,\Xi),
\end{equation}
where $V_{\ell}$ denotes an orthonormal basis of that subspace with $\ell = k + \abs{\Xi}$.
In other words, we can compress $Q_{m}$ into $Q_{m}V_{\ell}$ without losing significant information.

To fix the list of poles $\Xi$ in the definition~\eqref{eq:defVell} of $V_{\ell}$, we need to
determine a rational function that satisfies~\cref{eqn:relaxed-condition_rational}. To attain a reasonably small approximation tolerance, this clearly requires $\theta_k < \theta_{k+1}$.
Let $\chi_{\tau}(x)$ be the step function for some 
$\tau \in (\theta_{k}, \theta_{k+1})$.
We now relax the discrete conditions~\cref{eqn:relaxed-condition_rational} into the continuous approximation problem of finding a rational function $r_\tau$ such that
\begin{equation}\label{eqn:rat_approx}
    |r_{\tau}(x) - \chi_{\tau}(x)| < \text{tol}_{\mathrm{ra}} \quad \text{for every} \quad x \in [\tau-\eta, \tau +\eta] \setminus (\tau - \delta, \tau + \delta),
\end{equation}
with $\eta>\delta>0$ chosen such that $\theta_1,\ldots,\theta_k \in [\tau-\eta,\tau-\delta]$ and $\theta_{k+1},\ldots,\theta_m \in [\tau+\delta, \tau+\eta]$.

To keep the dimension $\ell = k + \abs{\Xi}$ of the compressed subspace small, we aim at determining a low-degree rational function $r_\tau$.
To achieve this, we fix an upper bound $d$ (to be determined later) on the degree and solve the rational approximation problem
\begin{equation*}
r_{\tau} = \argmin_{\deg(r) \le d}
\max_{x \in [\tau - \eta, \tau + \eta] \setminus (\tau - \delta, \tau + \delta)}
\left| r(x) - \chi_{\tau}(x) \right|.
\end{equation*}
This approximation problem is known as Zolotarev's fourth problem, which has been extensively studied in the literature \cite[Sec.~4.3]{petrushev2011rational}.
In particular, explicit formulas for the best rational approximation $r_\tau$, expressed in terms of elliptic functions, are given in \cite[Thm.~4.8]{petrushev2011rational}, with the solution taking the form
\begin{equation} \label{eq:rtauform}
r_{\tau}(x) =\edit{\frac{1}{2}-} \frac{(x - \tau)\, p\bigl((x - \tau)^2\bigr)}{q\bigl((x - \tau)^2\bigr)},
\end{equation}
for certain polynomials $p$ and $q$ of the same degree.
Moreover, it is shown in \cite[Thm.~3.3]{beckermann2017singular} that:
\begin{equation*}
\max_{x \in [\tau - \eta, \tau + \eta] \setminus (\tau - \delta, \tau + \delta)}
\left| r_{\tau}(x) - \chi_{\tau}(x) \right| \leq 
\sqrt{Z_{d}},\quad \text{where } 
\edit{
Z_{d} \leq 
4 \Bigl[\exp\left(\frac{\pi^2}{2 \log(4\eta/\delta)}\right)\Bigr]^{-2d}.}
\end{equation*}
Therefore, choosing
\begin{equation}
    \label{eq:bound_degree}
d = \biggl\lceil \frac{2}{\pi^2}\log\Bigl(\frac{4}{\tol_{\mathrm{ra}}}\Bigr) \cdot  \log\Bigl(\frac{4\eta}{\delta}\Bigr) \biggr\rceil,
\end{equation}
the approximation error falls below the prescribed tolerance $\tol_{\mathrm{ra}}$ and the resulting rational function then satisfies \cref{eqn:rat_approx} as well as \cref{eqn:relaxed-condition_rational}.
In particular, the value of $d$ depends logarithmically on $\tol_{\mathrm{ra}}^{-1}$ and $\eta/\delta$.
Once the parameters $d$, $\tau$, $\delta$ and $\eta$ are fixed, we can compute the poles of $r_\tau$ using numerical approximations of elliptic functions, as described in \cite{Nakatsukasa2016}.

Note that the form~\cref{eq:rtauform} of $r_{\tau}(x)$ implies that its set of poles $\Xi$ always includes \edit{an infinite pole}, ensuring that
\begin{equation*}
    e_{m}\in \mathcal{Q}(T_{m},e_m,\Xi) \subset \range(V_{\ell}).
\end{equation*}
As a result, the corresponding assumption of~\cref{prop:compression} is always satisfied.

Let us set 
\begin{equation} \label{eq:stepfunset}
  \tau = (\theta_{k} + \theta_{k+1})/2, \quad \delta = (\theta_{k+1} - \theta_{k})/2, \quad \eta = \theta_{m} - \tau;
\end{equation}
see~\cref{fig:rat_approx} for an illustration. 
Since we aim at approximating only a few small eigenvalues, we expect $\tau$ to be closer to $\lambda_1$ than to $\theta_m$, especially when $m \gg k$. In particular, it is reasonable to assume that
\[
 \lambda_{1}>2\tau-\theta_{m}.
\]
By the variational characterization of $\lambda_{1}$, this implies $\theta_{i}\geq \lambda_{1}>2\tau-\theta_{m}=\tau-\eta$ for $i = 1,\ldots,k$. Thus,~\eqref{eq:stepfunset} is a feasible choice and the step function approximation~\eqref{eqn:rat_approx} with the parameters~\eqref{eq:stepfunset} implies 
that both conditions~\cref{eqn:relaxed-condition_rational} hold.
Moreover, by the Cauchy interlacing theorem~\cite[Thm.~8.1.7]{Golub2013}, $\lambda_{1}\leq \lambda_{i} \leq \theta_{k}$, implying that $r_{\tau}(\lambda_i) \neq 0$ for $i = 1,\ldots, k$.

 \begin{figure}[htbp]
    \centering
\begin{tikzpicture}
    \draw[thick] (-2,0) -- (9,0);
    \draw[thick] (0, -0.1) -- (0, 0.1);         
    \node[below] at (0,0) {$\theta_1$};

    \node[below] at (1,-.1) {$\dots$};
    
    \draw[thick] (2, -0.1) -- (2, 0.1);         
    \node[below] at (2,0) {$\theta_k$};

    \draw[thick] (3.5, -0.1) -- (3.5, 0.1);          
    \node[below] at (3.5,-0.1) {$\tau$};

    \draw[thick] (5, -0.1) -- (5, 0.1);  
    \node[below] at (5.2,0) {$\theta_{k+1}$};

    \node[below] at (6.5,-.1) {$\dots$};
    \draw[thick] (8, -0.1) -- (8, 0.1);   
    \node[below] at (8,0) {$\theta_m$};          \draw[<->, thick] (3.5, 0.5) -- (8, 0.5);    
    \node[above] at (5.75, 0.5) {$\eta$};            
    \draw[<->, thick] (-1, .5) -- (3.5, .5);        
    \node[above] at (1.25, .5) {$\eta$};              

      \draw[<->, thick] (3.5, .75) -- (5, .75);     
    \node[above] at (4, .75) {$\delta$};            

    \draw[<->, thick] (2, .75) -- (3.5, .75);         
    \node[above] at (3, .75) {$\delta$};

\end{tikzpicture}

\begin{tikzpicture}
    \draw[thick] (-2,0) -- (9,0);

    \draw[thick] (0, -0.1) -- (0, 0.1);          
    \node[below] at (0,0) {$\theta_1$};

    \node[below] at (1,-.1) {$\dots$};
    
    \draw[thick] (2, -0.1) -- (2, 0.1);          
    \node[below] at (2,0) {$\theta_k$};
    \node[below] at (2.45,-.1) {$\dots$};
    \node[below] at (3,0) {$\theta_{\widehat{k}}$};
    \draw[thick] (2.1, -0.06) -- (2.1, 0.06);
    \draw[thick] (2.2, -0.06) -- (2.2, 0.06);
    \draw[thick] (2.3, -0.06) -- (2.3, 0.06);
    \draw[thick] (2.4, -0.06) -- (2.4, 0.06);
    \draw[thick] (2.5, -0.06) -- (2.5, 0.06);
    \draw[thick] (2.6, -0.06) -- (2.6, 0.06);
    \draw[thick] (2.7, -0.06) -- (2.7, 0.06);
    \draw[thick] (2.8, -0.06) -- (2.8, 0.06); 
    \draw[thick] (2.9, -0.1) -- (2.9, 0.1);         
    \draw[thick] (3.5, -0.1) -- (3.5, 0.1);          
    \node[below] at (3.5,-0.1) {$\tau$};

    \draw[thick] (5, -0.1) -- (5, 0.1);    
    \node[below] at (5,0) {$\theta_{\widehat{k}+1}$};

    \node[below] at (6.5,-.1) {$\dots$};
    
    \draw[thick] (8, -0.1) -- (8, 0.1);   
    \node[below] at (8,0) {$\theta_m$};         \draw[<->, thick] (3.5, 0.5) -- (8, 0.5);     
    \node[above] at (5.75, 0.5) {$\eta$};            
    \draw[<->, thick] (-1, .5) -- (3.5, .5);        
    \node[above] at (1.25, .5) {$\eta$};              

      \draw[<->, thick] (3.5, .75) -- (5, .75);     
    \node[above] at (4, .75) {$\delta$};

    \draw[<->, thick] (2, .75) -- (3.5, .75);         
    \node[above] at (3, .75) {$\delta$};

\end{tikzpicture}

\caption{Illustration of the parameter choices~\cref{eq:stepfunset,eq:paraRA} for rational approximation.}
\label{fig:rat_approx}
\end{figure}
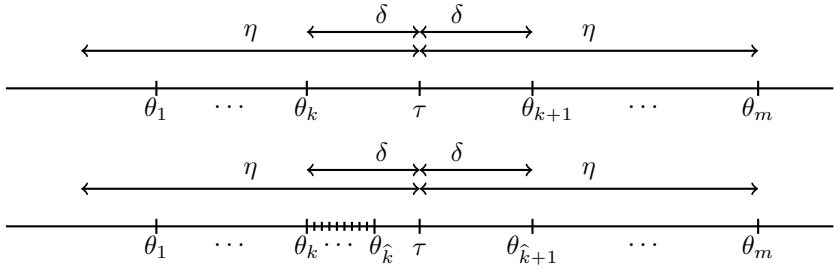
\end{paragraph}

\begin{paragraph}{Adaptive strategy to avoid tiny gaps}
While the dependence of the rational degree on the reciprocal relative gap $\eta/\delta$ is only logarithmic, a very small $\delta$, caused by 
$\theta_{k}$ and $\theta_{k+1}$ being very close, is still best avoided. It turns out that the gap can be easily increased by including additional eigenvectors in the basis.
Specifically, for $\widehat{k}$ with $k\leq \widehat{k}\leq m-1$, we let $V_{\ell,\widehat{k}}$ be an orthonormal basis that satisfies
\begin{equation}
    \label{eq:basisRA}
    \range(V_{\ell,\widehat{k}}) = \spa{s_{1},\dotsc,s_{\widehat{k}}}+\mathcal{Q}(T_{m},e_m,\Xi_{\widehat{k}}),
\end{equation}
where $\Xi_{\widehat{k}}$ is the of poles for a rational approximation $r_{\tau}$ satisfying \cref{eqn:rat_approx} with 
\begin{equation}
    \label{eq:paraRA}
    \tau = (\theta_k + \theta_{\widehat{k}+1}) / 2,\quad  \delta = (\theta_{\widehat{k}+1} - \theta_k) / 2 \quad\text{and}\quad \eta = \theta_m - \tau;
\end{equation}
see also~\cref{fig:rat_approx}.
\edit{Note that, as illustrated in the bottom panel of~\cref{fig:rat_approx}, the transition band $[\tau-\delta,\tau+\delta]$ may contain Ritz values $\theta_{k+1},\ldots,\theta_{\hat k}$. 
The rational approximation is not and does not need to be controlled at these Ritz values because the corresponding Ritz vectors are explicitly retained in the compressed basis.
}

Following the reasoning above, we expect $V_{\ell,\widehat{k}}$ \edit{from~\cref{eq:basisRA}} to be a good compression matrix.
While larger $\widehat{k}$ leads to larger gaps $\delta$ (and potentially reduced $\abs{\Xi_{\widehat{k}}}$), this comes at the cost of including more eigenvectors of $T_m$ in the compression matrix. \cref{alg:second_pole_strategy} strikes a compromise between these two aims by choosing $V_{\ell}$ to be the matrix $V_{\ell,\widehat{k}}$ with the minimal number $\ell$ of columns for $k\leq \widehat{k}\leq m-1$. We always assume that $V_{\ell}$ has fewer than $m$ columns; otherwise, $m$ should be increased further before attempting compression. \edit{In practice, we recommend $m=4k$.}

\color{black}
\begin{algorithm2e}[H]
    \caption{Adaptive strategy for selecting $V_{\ell}$}
    \label{alg:second_pole_strategy}
    \KwIn{Symmetric matrix $T_{m}$. Tolerance for rational approximation $\tol_{\mathrm{ra}}$. Integer $k$ for the number of desired eigenvalues.}
    \KwOut{Orthonormal basis $V_{\ell}$ for compression.}
Compute eigenpairs $(\theta_{1}, s_1), \dotsc, (\theta_{m}, s_m)$ of $T_m$\;
    \For{$\widehat{k}=k:m-1$}{
        Apply \cref{eq:bound_degree} to compute the degree $d_{\widehat{k}}$ of the rational approximation satisfying \cref{eqn:rat_approx} with parameters~\cref{eq:paraRA}\;
        }
   Find $k_{*}$ that minimizes $\widehat{k} + d_{\widehat{k}}$ and compute its poles $\Xi_{k_{*}}$\; 
   Return $V_{\ell}$ as the matrix $ V_{\ell,k_{*}}$ defined in \cref{eq:basisRA}\;
\end{algorithm2e}
In line~6 of \cref{alg:second_pole_strategy}, we need to construct the orthonormal basis $V_{\ell}$ as described in \cref{eq:basisRA}. 
For this purpose, we first apply the rational Arnoldi method from~\cite{berljafa2014} to construct an orthonormal basis for the rational Krylov subspace $\mathcal{Q}(T_{m},e_m,\Xi_{\widehat{k}})$. This is followed by computing the (economic) $Q$-factor of a (Householder) QR decomposition of the matrix consists of the first $\widehat{k}$ eigenvectors for $T_{m}$ and the rational Krylov subspace basis.
\end{paragraph}

\begin{paragraph}{Analysis of compression error}
 We conclude this section by showing that our choice of compression $V_{\ell}$ only introduces a small compression error in the relevant eigenvectors of $T_{n}$ and, hence, $A$.
\begin{theorem}
    \label{thm1}
    With $T_{n}$ defined as in \cref{thm:BKCS}, let
    $s_{1},\dotsc,s_{m} \in \R^m$ denote orthonormal eigenvectors 
    belonging to the eigenvalues  
    $\theta_{1}\leq \dotsb\leq \theta_{m}$ of $T_{m}$, and let 
    $W_k \in \R^{n \times k}$ contain 
    orthonormal eigenvectors belonging to the smallest $k$ eigenvalues $\lambda_1 \le \cdots \le \lambda_k$ of $T_n$.
    
    Given $0<\tol_{\mathrm{ra}}<1$ and $k\leq \widehat{k}\leq m-1$ such that $\theta_k < \theta_{\widehat{k}+1}$, set $\tau=(\theta_{k}+\theta_{\widehat{k}+1})/2$ and let $r_{\tau}$ be a rational function with poles $\Xi_{\widehat{k}}$ satisfying
    \begin{equation}
        \label{con:ratapp}
        \abs{r_{\tau}(x)-\chi_{\tau}(x)} <  \tol_{\mathrm{ra}} \quad \text{for every}\quad x \in \left[\lambda_{1},\theta_{m}\right]\setminus \big(\theta_k,\theta_{\widehat{k}+1}\big).
    \end{equation}
    Let $V_{\ell}$ be an orthonormal basis of $\spa{s_{1},\dotsc,s_{\widehat{k}}}+\mathcal{Q}(T_{m},e_{m},\Xi_{\widehat{k}})$. Then
    \begin{equation*}
        \norm{(I-PP^{\Ttran})W_{k}} < 2 \cdot \tol_{\mathrm{ra}} \quad {\mathrm{with}} \quad P=\diagm(V_{\ell},I).
    \end{equation*}
\end{theorem}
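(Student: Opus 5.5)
We reduce the estimate to a column-wise bound and then bound the error of each column through \cref{thm:BKCS}. Since $P=\diagm(V_\ell,I)$, the projector $I-PP^{\Ttran}$ acts only on the first $m$ components, so $(I-PP^{\Ttran})W_k = \begin{bmatrix}(I-V_\ell V_\ell^{\Ttran})\overline{W}_k\\ 0\end{bmatrix}$, where $\overline{W}_k$ holds the first $m$ rows of $W_k$. For any $x\in\R^k$ with $\norm{x}=1$, set $w=W_kx$ and let $\overline{w}$ be its first $m$ entries. It then suffices to show $\norm{(I-V_\ell V_\ell^{\Ttran})\overline{w}}<2\,\tol_{\mathrm{ra}}$.

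\textbf{Step 1: exact identity.} We apply \cref{thm:BKCS} to $v=r_\tau(T_n)^{-1}w$ instead of to a single eigenvector, so that all $k$ columns are handled together. This inverse exists because $\lambda_1,\dotsc,\lambda_k\in[\lambda_1,\theta_k]$, where \cref{con:ratapp} gives $r_\tau(\lambda_i)>1-\tol_{\mathrm{ra}}>0$; here Cauchy interlacing is used for $\lambda_i\le\theta_k$. Since $w$ lies in the corresponding invariant subspace, we have $v=W_k\Lambda^{-1}x$ with $\Lambda=\diagm(r_\tau(\lambda_1),\dotsc,r_\tau(\lambda_k))$, and hence
\[ \norm{v}\le (1-\tol_{\mathrm{ra}})^{-1}. \]
\cref{thm:BKCS} then yields
\[ \overline{w}\in r_\tau(T_m)\overline{v}+\mathcal{Q}(T_m,e_m,\Xi_{\widehat{k}}). \]
The rational Krylov part lies in $\range(V_\ell)$, so
\[ (I-V_\ell V_\ell^{\Ttran})\overline{w}=(I-V_\ell V_\ell^{\Ttran})r_\tau(T_m)\overline{v}. \]

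\textbf{Step 2: remove retained Ritz directions.} Expand $r_\tau(T_m)\overline{v}=\sum_j r_\tau(\theta_j)(s_j^{\Ttran}\overline{v})s_j$. The terms with $j\le\widehat{k}$ lie in $\range(V_\ell)$ and are annihilated. For $j\ge\widehat{k}+1$ we have $\theta_j\in[\theta_{\widehat{k}+1},\theta_m]$ and $\chi_\tau(\theta_j)=0$, so \cref{con:ratapp} gives $\abs{r_\tau(\theta_j)}<\tol_{\mathrm{ra}}$. Using orthonormality of the $s_j$ and $\norm{\overline{v}}\le\norm{v}$,
\[ \norm{(I-V_\ell V_\ell^{\Ttran})\overline{w}}\le \tol_{\mathrm{ra}}\,\norm{\overline{v}}\le \frac{\tol_{\mathrm{ra}}}{1-\tol_{\mathrm{ra}}}. \]

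\textbf{Step 3: obtain the constant $2$.} The bound $\tol_{\mathrm{ra}}/(1-\tol_{\mathrm{ra}})$ is below $2\tol_{\mathrm{ra}}$ only when $\tol_{\mathrm{ra}}<1/2$. For $\tol_{\mathrm{ra}}\ge 1/2$ the claim is trivial, because $\norm{(I-PP^{\Ttran})W_k}\le 1\le 2\tol_{\mathrm{ra}}$. One point needs checking here: the statement asks for strict inequality, and $\tol_{\mathrm{ra}}=1/2$ gives only $\le 1$. I would either verify the strict bound in that borderline case, for instance by showing the projector cannot have norm exactly $1$ on $\range(W_k)$, or settle for the non-strict bound there.

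\textbf{Main obstacle.} The delicate part is Step 1. One must check that \cref{thm:BKCS} applies to the vector $r_\tau(T_n)^{-1}w$, which requires the eigenvalues of $T_m$ and $T_n$ not to be poles of $r_\tau$. One must also bound $\norm{v}$ uniformly over all unit $x$. Taking the supremum over $x$ then gives the stated norm bound for the whole matrix $W_k$ rather than column by column; a column-by-column argument would cost an extra factor $\sqrt{k}$.
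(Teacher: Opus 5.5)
Your argument is correct, and it reaches the bound by a slightly different route from the paper. The paper applies \cref{thm:BKCS} to $w$ itself rather than to $r_\tau(T_n)^{-1}w$, and then splits
\[
\norm{(I-PP^{\Ttran})w}\le \norm{w-r_\tau(T_n)w}+\norm{(I-PP^{\Ttran})r_\tau(T_n)w}.
\]
It bounds the first term by $\tol_{\mathrm{ra}}$ using $\abs{r_\tau(\lambda_i)-1}<\tol_{\mathrm{ra}}$. It bounds the second by $\tol_{\mathrm{ra}}\norm{\overline{w}}\le\tol_{\mathrm{ra}}$, exactly as in your Step~2. This additive splitting never divides by $r_\tau(\lambda_i)$. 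It therefore needs neither $r_\tau(\lambda_i)\neq 0$ nor a case distinction in $\tol_{\mathrm{ra}}$, and the constant $2$ comes out directly.

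Your multiplicative version is the rigorous form of the heuristic \cref{eq:formulabarwi}, extended from single eigenvectors to arbitrary combinations $W_kx$. It also proves more: $\norm{(I-PP^{\Ttran})W_k}<\tol_{\mathrm{ra}}/(1-\tol_{\mathrm{ra}})=\tol_{\mathrm{ra}}+\order(\tol_{\mathrm{ra}}^2)$, i.e., asymptotically constant $1$ instead of $2$. The price is that the stated constant $2$ for large $\tol_{\mathrm{ra}}$ is only recovered through the trivial bound $\norm{(I-PP^{\Ttran})W_k}\le 1$.

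Your borderline worry at $\tol_{\mathrm{ra}}=1/2$ dissolves once you track strictness in Step~2. Set $c_1=\max_{j>\widehat{k}}\abs{r_\tau(\theta_j)}$ and $c_2=\max_{i\le k}r_\tau(\lambda_i)^{-1}$. These are maxima over finite sets, so $c_1<\tol_{\mathrm{ra}}$ and $c_2<(1-\tol_{\mathrm{ra}})^{-1}$ strictly. Hence, uniformly in $x$,
\[
\norm{(I-V_\ell V_\ell^{\Ttran})\overline{w}}\le c_1c_2<\tol_{\mathrm{ra}}/(1-\tol_{\mathrm{ra}}),
\]
which is at most $2\tol_{\mathrm{ra}}$ for all $\tol_{\mathrm{ra}}\le 1/2$, including equality. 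For $\tol_{\mathrm{ra}}>1/2$ you have $\norm{(I-PP^{\Ttran})W_k}\le 1<2\tol_{\mathrm{ra}}$.

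Two minor remarks. First, $r_\tau(T_n)$ need not be invertible on all of $\R^n$, since $r_\tau$ is uncontrolled beyond $\theta_m$ and inside the transition band $(\theta_k,\theta_{\widehat{k}+1})$. You only ever use the explicit preimage $v=W_k\Lambda^{-1}x$, so this is harmless. Second, the requirement that no eigenvalue of $T_m$ or $T_n$ be a pole of $r_\tau$ is also left implicit in the paper's proof, which invokes \cref{thm:BKCS} as well.
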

\begin{proof} 
    It suffices to show that $\norm{(I-PP^{\Ttran})w}\leq 2 \cdot \tol_{\mathrm{ra}}$ for every $w = W_{k} \alpha$ with $\alpha = [\alpha_1,\ldots,\alpha_k]^\Ttran$ such that $\norm{\alpha}=1$.
    By Cauchy interlacing, $\lambda_{1}\leq \lambda_{i}\leq \theta_{k} < \tau$ for $i = 1,\ldots, k$.
    Using the error bound~\cref{con:ratapp}, we have that 
    \begin{equation*}
        \norm{r_{\tau}(T_{n})w-w}^{2} =
        \Bignorm{\sum_{i=1}^{k}\alpha_{i}\bigl(r_{\tau}(\lambda_i)-1\bigr) w_{i}}^{2} = \sum_{i=1}^{k} \alpha_{i}^{2}\bigl(r_{\tau}(\lambda_{i})-\chi_{\tau}(\lambda_{i})\bigr)^{2} <  \tol_{\mathrm{ra}}^{2}. 
    \end{equation*} 
    This implies $\norm{(I-PP^{\Ttran})w}< \norm{(I-PP^{\Ttran})r_{\tau}(T_{n})w}+\tol_{\mathrm{ra}}$. It remains to show that the first term of the sum is also bounded 
    by $\tol_{\mathrm{ra}}$. For this purpose, we let 
    $\overline{w}\in\R^{m}$ contain the first $m$ components of $w$ and 
    recall from~\cref{thm:BKCS} that
    \begin{equation*}
        r_{\tau}(T_{n})w - \begin{bmatrix}
            r_{\tau}(T_{m})\overline{w}\\0
        \end{bmatrix}\in 
            \range\begin{bmatrix}
            V_{\ell}& 0 \\ 0 & I
        \end{bmatrix}=\range(P).
    \end{equation*}
    In turn,
    \begin{equation*}
        \begin{aligned}
            \norm{(I-PP^{\Ttran})r_{\tau}(T_{n})w} &= \bignorm{(I-V_{\ell}V_{\ell}^{\Ttran})r_{\tau}(T_{m})\overline{w}} =\Bignorm{(I-V_{\ell}V_{\ell}^{\Ttran})\sum_{i=1}^{m}r_{\tau}(\theta_{i})(s_{i}^{\Ttran}\overline{w})s_{i}} \\ 
            &= \Bignorm{(I-V_{\ell}V_{\ell}^{\Ttran})\sum_{i=\widehat{k}+1}^{m}r_{\tau}(\theta_{i})(s_{i}^{\Ttran}\overline{w})s_{i}}
            \leq \Bignorm{\sum_{i=\widehat{k}+1}^{m}r_{\tau}(\theta_{i})(s_{i}^{\Ttran}\overline{w})s_{i}},
        \end{aligned}
    \end{equation*}
    where we inserted the spectral decomposition $T_{m}=\sum_{i=1}^{m}\theta_{i}s_{i}s_{i}^{\Ttran}$ in the second equality and used $s_{i}\in\range(V_{\ell})$ for $1\leq i\leq \widehat{k}$ in the third equality. The proof is completed by  
    \begin{equation*}
        \Bignorm{\sum_{i=\widehat{k}+1}^{m}r_{\tau}(\theta_{i})(s_{i}^{\Ttran}\overline{w})s_{i}}^{2}
        =\sum_{i=\widehat{k}+1}^{m}r_{\tau}^{2}(\theta_{i})(s_{i}^{\Ttran}\overline{w})^{2} \leq \norm{\overline{w}}^{2}\max_{\widehat{k}+1\leq i\leq m}\abs{r_{\tau}(\theta_{i})}^{2}
        < \tol_{\mathrm{ra}}^{2},  
    \end{equation*}
    where we applied the error bound~\cref{con:ratapp} once more. 
\end{proof}
\end{paragraph}

\subsection{Reorthogonalization with fill-in}
\label{sec:reorth}

Roundoff error severely affects the Lanczos process and, in turn, the method presented in this work. In particular, the vectors 
produced by the three-term recurrence~\cref{eq:3recur} quickly lose numerical orthogonality. In the standard Lanczos method~\cite[p.~303]{Parlett1998}, this is fixed by \emph{immediately reorthogonalizing} each newly produced vector $q_{i+1}$ against \emph{all} previously generated vectors $q_1,\ldots, q_i$.
In principle, reorthogonalization destroys the three-term recurrence, since it involves more than only the last two vectors.
However, its immediate application ensures that the additional orthogonalization 
coefficients produced by reorthogonalization remain on the level of roundoff error~\cite[p.~350]{Stewart2001}.
This allows one to safely disregard these coefficients or, optionally, absorb them into the three-term recurrence. In turn, both the tridiagonal structure of $T_{\lan}$ and the numerical orthogonality of $Q_{\lan}$ in the Lanczos decomposition~\cref{eq:lanD} are preserved. 

In Lanczos with compression, we need to ensure that not only the orthogonality of the basis $Q_m$ but also the relation $Q_{m+1}^{\Ttran} F_m = 0$ in the Krylov-like decomposition~\eqref{eq:LCD} remain valid up to the level of roundoff error. While (repeated) reorthogonalization can be used for this purpose, there are two complications. Because reorthogonalization only has access to a compression of the Lanczos basis, one cannot expect that the three-term recurrence will be preserved (up to the level of roundoff error). Additionally, explicitly reorthogonalizing against $F_m$ would require us to store this matrix, which would roughly double the storage cost. 
In the following, we develop a reorthogonalization strategy that takes both issues into account. By allowing for fill-in in the
tridiagonal structure, we take care of the condition $Q_{m+1}^\Ttran F_m = 0$ without the need for storing $F_m$.%
\edit{\begin{remark}
    Besides full reorthogonalization, other strategies, such as selective reorthogonalization \cite{Parlett1979} and partial reorthogonalization \cite{Simon1984}, have also been explored for Lanczos method (with restarting). The following discussion focuses on full reorthogonalization because of its prevalence in state-of-the-art implementations, including 
    ARPACK \cite{lehoucq1998arpack} and the default Krylov-Schur solver in SLEPc~\cite{hernandez2007parallel}.
    \end{remark}
}

\begin{paragraph}{Reorthogonalization of Lanczos expansion}

In the following, we consider a single step of the Lanczos process that computes a new Lanczos vector $q_{i+2}$. Letting $c$ denote the round of the procedure~\cref{eq:compression} during which this vector is produced and assuming $c \ge 1$, we can write 
$i=c(m-\ell)+j$ with $\ell\leq j< m$. Suppose we start from a \emph{perturbed} Krylov-like decomposition
\begin{equation}
\label{eq:roe}
    A\widehat{Q}_{j} = \widehat{Q}_{j}\widehat{T}_{j}+\widehat{q}_{i+1}\widehat{b}_{j}^{\Ttran}+\widehat{F}_{j}.
\end{equation}
To increase readability, we have dropped the superscript $(c)$ for denoting the $c$th round.
We assume that 
$\widehat{T}_{j}$ is symmetric, $\widehat{b}_{j}=\widehat{Q}_{j}^{\Ttran}A\widehat{q}_{i+1}$, and 
\begin{equation} \label{eq:roe2}
    \widehat{Q}_{j+1}^{\Ttran}\widehat{Q}_{j+1}-I = \order(\mpr),\quad 
    \widehat{Q}_{j+1}^{\Ttran}\widehat{F}_{j} = \order(\norm{A}\mpr),\quad 
    \widehat{b}_{j} = \order(\norm{A}),
\end{equation}
with $\widehat{Q}_{j+1} = [\widehat{Q}_{j},\widehat{q}_{i+1}]$ and the unit roundoff $\mpr$.
Here and in the following, we use $\order(\cdot)$ to hide a constant that only depends on $n$ and grows mildly (at most polynomially) with $n$.  A decomposition~\eqref{eq:roe} satisfying these assumptions will be called a \emph{numerically stable Krylov-like decomposition}.
Our goal is to suitably modify the $(i+1)$th Lanczos step such that it preserves stability.
As we will see below, neither $\widehat{b}_{j}$ nor $\widehat{F}_{j}$ needs to be available explicitly for this purpose.

The Lanczos step that produces the $(i+2)$th vector proceeds by computing
\begin{equation*}
    \widecheck{w} = \fl(A\widehat{q}_{i+1}),\quad 
    \widecheck{\alpha}_{i+1} = \fl(\widehat{q}_{i+1}^{\Ttran}\widecheck{w}) ,\quad
    \widecheck{p} = \fl(\widecheck{w}-\widecheck{\alpha}_{i+1}\widehat{q}_{i+1}-\widehat{\beta}_{i}\widehat{q}_{i}), 
\end{equation*}
where $\fl(\cdot)$ denotes the result computed in floating-point arithmetic. Standard roundoff error analysis~\cite{Higham2002} implies that 
\begin{equation}
    \label{eq:rorp}
    \widecheck{p} = A\widehat{q}_{i+1}-\widecheck{\alpha}_{i+1}\widehat{q}_{i+1}-\widehat{\beta}_{i}\widehat{q}_{i} + \order(\norm{A}\mpr).
\end{equation}
However, $\widecheck{p}$ is \emph{not} ensured to be numerically orthogonal, neither to $\widehat{Q}_{j+1}$ nor to $\widehat{F}_{j}$. A common and effective way of ensuring numerical orthogonality to $\widehat{Q}_{j+1}$ is to perform full reorthogonalization as follows:
\begin{equation*}
    \widecheck{z} = \fl\bigl((I-\widehat{Q}_{j+1}\widehat{Q}_{j+1}^{\Ttran})\widecheck{p}\bigr),\quad 
    \widehat{\beta}_{i+1} = \fl(\norm{\widecheck{z}}),\quad 
    \widehat{q}_{i+2} = \fl(\widecheck{z}/\widehat{\beta}_{i+1}). 
\end{equation*}
Note that one step of reorthogonalization may not be enough; if $\widehat{q}_{i+2}$ is not numerically orthogonal to $\widehat{Q}_{j+1}$, one repeats this Gram--Schmidt process as described in \cite{Giraud2005}. After one or possibly several reorthogonalization steps, we obtain $\widehat{\beta}_{i+1}$ and $\widehat{q}_{i+2}$ satisfying
\begin{equation}
    \label{eq:roebq}
    \widehat{\beta}_{i+1}\widehat{q}_{i+2} = (I-\widehat{Q}_{j+1}\widehat{Q}_{j+1}^{\Ttran})\widecheck{p}+\order(\norm{A}\mpr),\quad
    \widehat{Q}_{j+2}^{\Ttran}\widehat{Q}_{j+2}-I=\order(\mpr)
\end{equation}
where $\widehat{Q}_{j+2}=[\widehat{Q}_{j+1},\widehat{q}_{i+2}]$. By \cref{eq:rorp},
we also have that $\widehat{\beta}_{i+1} = \order(\norm{A})$.
Combining \cref{eq:roebq} with the stable Krylov-like decompositions \cref{eq:roe}, we arrive at the decomposition 
\begin{equation}
    \label{eq:roex}
    A\widehat{Q}_{j+1} = \widehat{Q}_{j+1}\widecheck{T}_{j+1}+\widehat{\beta}_{i+1}\widehat{q}_{i+2}e_{j}^{\Ttran}+\widecheck{F}_{j+1},
\end{equation}
where
\begin{equation*}
    \widecheck{T}_{j+1} = \begin{bmatrix}
        \widehat{T}_{j} & \widehat{\beta}_{i}e_{j}\\ 
        \widehat{b}_{j}^{\Ttran} & \widecheck{\alpha}_{i+1}
    \end{bmatrix},\quad 
    \widecheck{F}_{j+1} \defi 
    A\widehat{Q}_{j+1} - \widehat{Q}_{j+1}\widecheck{T}_{j+1}-\widehat{\beta}_{i+1}\widehat{q}_{i+2}e_{j}^{\Ttran}
    = \begin{bmatrix}
        \widehat{F}_{j}& \order(\norm{A}\mpr)
    \end{bmatrix}.
\end{equation*}

The relation~\eqref{eq:roex} does \emph{not} constitute a numerically stable Krylov-like decomposition because $\widecheck{T}_{j+1}$ is not symmetric and 
the numerical orthogonality between $\widehat{Q}_{j+2}$ and $\widecheck{F}_{j+1}$ is not (yet) ensured.
In principle, both issues could be addressed by explicitly reorthogonalizing $\widehat{q}_{i+2}$ with respect to $\widehat{F}_{j}$, but this would require one to hold $\widehat{F}_{j}$. Instead, we update the (unknown) matrix $\widehat{F}_{j}$ implicitly. For this purpose, we replace $\widecheck{T}_{j+1}$ by 
\begin{equation*}
    \widehat{T}_{j+1} = \begin{bmatrix}
        \widehat{T}_{j} & \fl(\widehat{Q}_{j}^{\Ttran}A\widehat{q}_{i+1}) \\ 
        \fl(\widehat{Q}_{j}^{\Ttran}A\widehat{q}_{i+1})^{\Ttran} & \fl(\widehat{q}_{i+1}^{\Ttran}A\widehat{q}_{i+1})
    \end{bmatrix}.
\end{equation*}
Note that there is now fill-in in the last row and column, and $\widehat{T}_{j+1}$ is symmetric.
Let us define  
\begin{equation}  \label{eq:deffjp1}
    \widehat{F}_{j+1} = \begin{bmatrix}
        (I-\widehat{q}_{i+2}\widehat{q}_{i+2}^{\Ttran})\widehat{F}_{j} & 0
    \end{bmatrix}+\Delta,\quad 
    \widehat{b}_{j+1} =\widehat{Q}_{j+1}^{\Ttran}A\widehat{q}_{i+2},
\end{equation}
with
\begin{equation*}
    \Delta :=  A\widehat{Q}_{j+1} - \widehat{Q}_{j+1}\widehat{T}_{j+1}-\widehat{q}_{i+2}\widehat{b}_{j+1}^{\Ttran}- \begin{bmatrix}
        (I-\widehat{q}_{i+2}\widehat{q}_{i+2}^{\Ttran})\widehat{F}_{j} & 0
    \end{bmatrix}
\end{equation*}
Trivially, these definitions imply 
\begin{equation}
    \label{eq:roen}
    A\widehat{Q}_{j+1} = \widehat{Q}_{j+1}\widehat{T}_{j+1}+\widehat{q}_{i+2}\widehat{b}_{j+1}^{\Ttran}+\widehat{F}_{j+1}.
\end{equation}

If we can show that $\widehat{Q}_{j+2}^{\Ttran}\widehat{F}_{j+1}$ is small in norm then~\eqref{eq:roen} satisfies all the properties of a stable Krylov-like decomposition. To show this, we partition $\Delta=[\Delta_{j},\delta_{j+1}]$, such that $\delta_{j+1}$ is a vector.
Using $\fl(\widehat{Q}_{j}^{\Ttran}A\widehat{q}_{i+1}) = \widehat b_j+\order(\norm{A}\mpr)$, one computes
\begin{align}
 \Delta_{j} &= A\widehat{Q}_{j}-\widehat{Q}_{j}\widehat{T}_{j}-\widehat{q}_{i+1} \widehat b_j^\Ttran - \widehat{q}_{i+2}\widehat{q}_{i+2}^{\Ttran}A\widehat{Q}_{j}-(I-\widehat{q}_{i+2}\widehat{q}_{i+2}^{\Ttran})\widehat{F}_{j} + \order(\norm{A}\mpr) \nonumber \\
 &= - \widehat{q}_{i+2}\widehat{q}_{i+2}^{\Ttran}\big( A\widehat{Q}_{j}-\widehat{F}_{j} \big) + \order(\norm{A}\mpr) =
 - \widehat{q}_{i+2}\widehat{q}_{i+2}^{\Ttran}\big( \widehat{Q}_{j}\widehat{T}_{j}+\widehat{q}_{i+1}\widehat{b}_{j}^{\Ttran} \big) + \order(\norm{A}\mpr),  \label{eq:usekrylov} 
\end{align}
where~\eqref{eq:usekrylov} uses the stable Krylov-like decomposition~\cref{eq:roe} twice.
Because of the numerical orthogonality~\cref{eq:roebq} of $\widehat{Q}_{j+2}$, this establishes $\Delta_{j} = \order(\norm{A}\mpr)$.
For $\delta_{j+1}$, one computes
\begin{equation*}
    \begin{aligned}
        \delta_{j+1} &= A\widehat{q}_{i+1}-\widehat{Q}_{j}\fl(\widehat{Q}_{j}^{\Ttran}A\widehat{q}_{i+1})-\widehat{q}_{i+1}\fl(\widehat{q}_{i+1}^{\Ttran}A\widehat{q}_{i+1})-\widehat{q}_{i+2}\widehat{q}_{i+2}^{\Ttran}A\widehat{q}_{i+1}\\ 
        &= (I-\widehat{Q}_{j+2}\widehat{Q}_{j+2}^{\Ttran})A\widehat{q}_{i+1}+\order(\norm{A}\mpr) = \order(\norm{A}\mpr),
    \end{aligned}
\end{equation*}
where the last equality uses~\cref{eq:rorp,eq:roebq}.
Thus, we have shown that $\Delta=\order(\norm{A}\mpr)$ and, in turn, the definition~\cref{eq:deffjp1} together with~\cref{eq:roe2} yield
\begin{equation*}
    \widehat{Q}_{j+2}^{\Ttran}\widehat{F}_{j+1}
    =\begin{bmatrix}
        \widehat{Q}_{j+1}^{\Ttran}\widehat{F}_{j}&0\\ 
        0 &0
    \end{bmatrix}+\order(\norm{A}\mpr) = \order(\norm{A}\mpr).
\end{equation*} 

In summary, we have shown that~\cref{eq:roen} is indeed a stable Krylov-like decomposition. Thus, Lanczos expansion steps together with our reorthogonalization strategy maintain stable Krylov-like decompositions at the expense of admitting additional fill-in in the matrix $\widehat{T}_{j+1}$. This also applies to the classical Lanczos method carried out during the first round of the procedure~\eqref{eq:compression}, provided that the usual full reorthorgonalization is used.
\end{paragraph}

\begin{paragraph}{Compression step}
    In order to conclude that the entire procedure~\cref{eq:compression} results in stable Krylov-like decompositions, it remains to show that they are preserved by compression steps as well. For this purpose, consider a stable Krylov-like decomposition \emph{before}
    compression is carried out:
\[
    A\widehat{Q}_{m} = \widehat{Q}_{m}\widehat{T}_{m}+\widehat{q}_{i+1}\widehat{b}_{m}^{\Ttran}+\widehat{F}_{m},
\]
that is, the coefficients satisfy the properties stated above for~\cref{eq:roe}, with $j$ replaced by $m$.
Let $\widehat{V}_{\ell}$ denote the compression matrix computed by~\cref{alg:second_pole_strategy}. We assume that the block Gram--Schmidt process with reorthogonalization~\cite{Barlow2013} is used, ensuring that $\widehat{V}_{\ell}^{\Ttran}\widehat{V}_{\ell}-I=\order(\mpr)$.
Following~\cref{thmLan}, the compression step yields, when carried out in floating point arithmetic, a decomposition of the form
    \begin{equation}
        \label{eq:roec}
        A\widehat{\widetilde{Q}}_{\ell} = \widehat{\widetilde{Q}}_{\ell}\widehat{\widetilde{T}}_{\ell}+\widehat{q}_{i+1}\widehat{\widetilde{b}}_{\ell}^{\Ttran}+\widehat{\widetilde{F}}_{\ell},
    \end{equation}
    where $\widehat{\widetilde{b}}_{\ell} = \widehat{V}_{\ell}^{\Ttran}\widehat{b}_{m}$, 
    \begin{equation*}
        \widehat{\widetilde{Q}}_{\ell} = \fl(\widehat{Q}_{m}\widehat{V}_{\ell}),\quad 
        \widehat{\widetilde{T}}_{\ell} = \fl(\widehat{V}_{\ell}^{\Ttran}\widehat{T}_{m}\widehat{V}_{\ell}), \quad 
        \widehat{\widetilde{F}}_{\ell} =  \widehat{Q}_{m}(I-\widehat{V}_{\ell}\widehat{V}_{\ell}^{\Ttran})\widehat{T}_{m}\widehat{V}_{\ell}+\widehat{F}_{m}\widehat{V}_{\ell}+\widetilde{\Delta}
    \end{equation*}
    with $\widetilde{\Delta}=\order(\norm{A}\mpr)$. 
    From a basic roundoff error analysis, it follows that~\cref{eq:roec} is again a stable Krylov-like decomposition. 
\end{paragraph}

\begin{paragraph}{Backward stability}
It is well known that full reorthogonalization ensures the backward stability of the classical Lanczos method, the Arnoldi method, and the Krylov--Schur method; see, e.g.,~\cite[Thm.~4.1]{stewart2002krylov}~and~\cite[Chap.~5, Thm.2.5]{Stewart2001}.
As shown above, our reorthogonalization strategy ensures that Lanczos with compression returns a stable Krylov-like decomposition~\cref{eq:roe}. 
The following theorem shows that this implies backward stability, that is, Lanczos with compression returns an exact Krylov-like decomposition for slightly perturbed matrices.
\begin{theorem}
    \label{thmBS}
    Given a stable Krylov-like decomposition~\cref{eq:roe}, there is a matrix $[Q_j,q_{i+1}]$ with orthonormal columns
    and a matrix $E$ such that
    \[
     \big[\widehat{Q}_{j}, \widehat q_{i+1}\big] - [Q_j,q_{i+1}] = \order(\mpr),\quad E = \order(\norm{A}\mpr),
    \]
and
\begin{equation}
\label{eq:bs}
    (A+E){Q}_{j} = {Q}_{j}\widehat{T}_{j}+{q}_{i+1}\widehat{b}_{j}^{\Ttran}+{F}_{j},
\end{equation}
for some ${F}_{j}$ with $[Q_{j},q_{i+1}]^{\Ttran} F_j = 0$ and $\widehat F_j - F_j = \order(\norm{A}\mpr)$.
\end{theorem}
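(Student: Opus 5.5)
The plan is to orthonormalize $\widehat{Q}_{j+1} := [\widehat{Q}_{j},\widehat{q}_{i+1}]$ by a nearby orthonormal matrix, carry the residual over, and then absorb whatever is not orthogonal to the new basis into a symmetric perturbation $E$.

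\textbf{Step 1 (orthonormal basis).} Let $\widehat{Q}_{j+1} = Q_{j+1} R$ be the QR decomposition, or equivalently take the orthogonal polar factor. Since $\widehat{Q}_{j+1}^{\Ttran}\widehat{Q}_{j+1} - I = \order(\mpr)$, we have $R - I = \order(\mpr)$. Hence $Q_{j+1} := [Q_j, q_{i+1}]$ has orthonormal columns and satisfies $\widehat{Q}_{j+1} - Q_{j+1} = \order(\mpr)$. Using the polar factor is convenient because its distance to $\widehat{Q}_{j+1}$ is bounded by $\norm{\widehat{Q}_{j+1}^{\Ttran}\widehat{Q}_{j+1}-I}$.

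\textbf{Step 2 (residual in the exact basis).} Substitute $\widehat{Q}_j = Q_j + \order(\mpr)$ and $\widehat q_{i+1} = q_{i+1} + \order(\mpr)$ into \cref{eq:roe}. We use $\widehat{T}_j = \order(\norm{A})$, which follows from \cref{eq:roe} since $\widehat T_j \approx \widehat Q_j^\Ttran A \widehat Q_j$, and $\widehat b_j = \order(\norm{A})$ from \cref{eq:roe2}. This gives
\[
AQ_j = Q_j\widehat{T}_j + q_{i+1}\widehat{b}_j^{\Ttran} + \widehat{F}_j + G, \qquad G = \order(\norm{A}\mpr).
\]
Define $R_j := \widehat F_j + G$. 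From \cref{eq:roe2} we get $Q_{j+1}^{\Ttran}R_j = \order(\norm{A}\mpr)$. We then split
\[
R_j = F_j + Q_{j+1}Q_{j+1}^{\Ttran}R_j, \qquad F_j := (I-Q_{j+1}Q_{j+1}^{\Ttran})R_j .
\]
By construction $Q_{j+1}^{\Ttran}F_j = 0$, and $\widehat F_j - F_j = -G + Q_{j+1}Q_{j+1}^{\Ttran}R_j = \order(\norm{A}\mpr)$.

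\textbf{Step 3 (constructing $E$).} It remains to find $E$ with $EQ_j = -Q_{j+1}Q_{j+1}^{\Ttran}R_j =: -Q_{j+1}X$, where $X = \order(\norm{A}\mpr)$ has size $(j+1)\times j$. A general solution is $E = -Q_{j+1}XQ_j^{\Ttran}$. If $E$ is also required to be symmetric (natural for a symmetric problem, though the statement does not demand it), split $X = [X_1; x_2^{\Ttran}]$ with $X_1 \in \R^{j\times j}$. The following choice works:
\[
E = -Q_j\,\mathrm{sym}(X_1)\,Q_j^{\Ttran} - q_{i+1}x_2^{\Ttran}Q_j^{\Ttran} - Q_jx_2q_{i+1}^{\Ttran},
\]
provided the antisymmetric part of $X_1$ is $\order(\norm{A}\mpr)$-consistent with symmetry. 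This holds because $Q_j^{\Ttran}AQ_j - \widehat T_j = X_1$ up to roundoff, and both $Q_j^{\Ttran}AQ_j$ and $\widehat T_j$ are symmetric, so $X_1$ is symmetric. In either construction, $E = \order(\norm{A}\mpr)$ and \cref{eq:bs} holds exactly.

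\textbf{Main obstacle.} The delicate point is Step 2. Every perturbation term must be controlled by $\norm{A}\mpr$, with no dependence on $\norm{\widehat F_j}$ beyond $\order(\norm{A})$. This requires that $\widehat F_j$ itself be $\order(\norm{A})$, which follows from \cref{eq:roe} together with the bounds on $\widehat T_j$ and $\widehat b_j$. With that in hand, the symmetry of $E$ is the remaining detail.
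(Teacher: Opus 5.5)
Your proof is correct and has the same skeleton as the paper's. The paper also orthonormalizes via the polar factor $Q_{j+1}=\widehat Q_{j+1}(\widehat Q_{j+1}^\Ttran\widehat Q_{j+1})^{-1/2}$. Its choice $E=(Q_j\widehat T_j+q_{i+1}\widehat b_j^\Ttran+F_j-AQ_j)Q_j^\Ttran$ reduces to your $-Q_{j+1}XQ_j^\Ttran$ once your $F_j$ is inserted.

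The only difference in the main construction is where the substitution error $G$ goes. The paper takes $F_j=(I-Q_{j+1}Q_{j+1}^\Ttran)\widehat F_j$ and lets $E$ absorb all of $G$. You instead project $\widehat F_j+G$, so your $E$ has range inside $\range(Q_{j+1})$.

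Your ``main obstacle'' is not really one. The paper writes $Q_{j+1}^\Ttran=(\widehat Q_{j+1}^\Ttran\widehat Q_{j+1})^{-1/2}\widehat Q_{j+1}^\Ttran$ (for QR, $R^{-\Ttran}\widehat Q_{j+1}^\Ttran$). This gives $Q_{j+1}^\Ttran\widehat F_j=\order(\norm{A}\mpr)$ directly from \eqref{eq:roe2}, with no bound on $\norm{\widehat F_j}$ needed. Your bound $\widehat F_j=\order(\norm{A})$ is also valid.

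Your symmetric $E$ is a genuine strengthening that the paper does not state. The paper's $E$ is generally nonsymmetric, so $A+E$ leaves the class of symmetric matrices. Your construction works because $X_1=Q_j^\Ttran AQ_j-\widehat T_j$ holds \emph{exactly}, by $Q_j^\Ttran Q_j=I$, $Q_j^\Ttran q_{i+1}=0$ and the symmetry of $\widehat T_j$. It is not merely ``up to roundoff'' as you write. This exactness matters: any admissible $E$ satisfies $Q_j^\Ttran EQ_j=-X_1$, so if $X_1$ were only approximately symmetric, no symmetric $E$ could satisfy \eqref{eq:bs} exactly.
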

\begin{proof}

    Letting $\widehat{Q}_{j+1} = [\widehat{Q}_{j},\widehat{q}_{i+1}]$, we define
    \begin{equation*}
        Q_{j+1} := [Q_{j},q_{j+1}] \defi \widehat{Q}_{j+1}(\widehat{Q}_{j+1}^{\Ttran}\widehat{Q}_{j+1})^{-1/2}.
    \end{equation*}
    This matrix clearly has orthonormal columns and satisfies
    \begin{equation*}
        Q_{j+1}-\widehat{Q}_{j+1}
         = \widehat{Q}_{j+1}\bigl((\widehat{Q}_{j+1}^{\Ttran}\widehat{Q}_{j+1})^{-1/2}-I\bigr) = \order(\mpr), 
    \end{equation*}
    using the numerical orthogonality~\eqref{eq:roe2} of $\widehat{Q}_{j+1}$. Setting
    $
        F_{j} \defi (I-Q_{j+1}Q_{j+1}^{\Ttran})\widehat{F}_{j}
    $
    ensures $Q_{j+1}^{\Ttran}F_{j}=0$ and 
    \begin{equation*}
        \widehat{F}_{j}-F_{j} =  Q_{j+1}Q_{j+1}^{\Ttran}\widehat{F}_{j} = Q_{j+1}(\widehat{Q}_{j+1}^{\Ttran}\widehat{Q}_{j+1})^{-1/2}\widehat{Q}_{j+1}^{\Ttran}\widehat{F}_{j} = \order(\norm{A}\mpr),   
    \end{equation*}
    using $\widehat{Q}_{j+1}^{\Ttran}\widehat{F}_{j}=\order(\norm{A}\mpr)$ from~\eqref{eq:roe2}. Finally, setting
    \begin{equation*}
        E \defi  (Q_{j}\widehat{T}_{j}+q_{i+1}\widehat{b}_{j}^{\Ttran}+F_{j}-AQ_{j})Q_{j}^{\Ttran},
    \end{equation*}
    ensures that~\cref{eq:bs} is valid and
        \begin{equation*}
        E = \bigl((Q_{j}-\widehat{Q}_{j})\widehat{T}_{j}+(q_{i+1}-\widehat{q}_{i+1})\widehat{b}_{j}^{\Ttran}+(F_{j}-\widehat{F}_{j})-A(Q_{j}-\widehat{Q}_{j})\bigr)Q_{j}^{\Ttran} = \order(\norm{A}\mpr).
    \end{equation*}
    Here, we used $\widehat{T}_{j}=\order(\norm{A})$, which is implied by multiplying the stable Krylov-like decomposition \cref{eq:roe} from the left with $\widehat{Q}_{j}^{\Ttran}$.
\end{proof}

\cref{thmBS} has the same implications as existing backward stability results for the Krylov--Schur method~\cite[Chap.~5, Thm.2.5]{Stewart2001}. In particular, it implies
\begin{equation*}
    \widehat{T}_{j}-Q_{j}^{\Ttran}AQ_{j} = Q_{j}^{\Ttran}EQ_{j} = \order(\norm{A}\mpr),
\end{equation*}
showing that the computed Ritz values (computed with a backward stable eigensolver applied to $\widehat{T}_{j}$)
are Ritz values of a slightly perturbed matrix $A$. 
\emph{Assuming}, for simplicity, that $\range(Q_{j})$ contains an exact eigenvector of $A$, the Rayleigh--Ritz procedure can extract an eigenvalue/eigenvector approximation up to an error of $\order(\norm{A}\mpr)$.

\end{paragraph}
\subsection{Pseudocode}
\label{sec:resapp}
Combining the compression mechanism from \cref{sec:restart}, the adaptive strategy for choosing $V_{\ell}$ from \cref{sec:Vell}, and the reorthogonalization with fill-in from \cref{sec:reorth}, we arrive at~\cref{alg:LC}, the main algorithm of this work.

\begin{algorithm2e}[H]
    \caption{Lanczos with compression}
    \label{alg:LC}
    \KwIn{Symmetric matrix $A$. Tolerance for rational approximation $\tol_{\mathrm{ra}}$. Tolerance for residual $\tol_{\mathrm{res}}$. Integer $k$ for the number of desired eigenvalues. Integer $m$ for the maximum allowed dimension before each compression.}
    \KwOut{Ritz values/vectors approximating the desired eigenvalues/eigenvectors.}
    Choose $q_{1}$ as normalized Gaussian random vector\;
    Set $\boldsymbol\alpha_{0}=\boldsymbol\beta_{0}=[]$, $Q_{1}^{(0)}=[q_{1}]$, $\beta_{0}=0$, $c=0$ and $j=1$\;

        \For{$i=1,2,3,\dotsc,$}{
            Perform Lanczos  step to compute relation $\widecheck{q}_{i+1} = \beta_{i}^{-1}(Aq_{i}-\alpha_{i}q_{i}-\beta_{i-1}q_{i-1})$\; 
            Perform reorthogonalization with fill-in, described in \cref{sec:reorth}, to obtain $q_{i+1}$ and $T_{j}^{(c)}$\;
            Update $\boldsymbol\alpha_{i} = [\boldsymbol\alpha_{i-1},\alpha_{i}]$ and $\boldsymbol\beta_{i} = [\boldsymbol\beta_{i-1},\beta_{i}]$\;
            \If{$j=m$}
            {
                Check convergence; see~\cref{sec:checkconvergence} below\; 
                Apply \cref{alg:second_pole_strategy} to $T_{m}^{(c)}$ to compute orthonormal compression matrix $V_{\ell}^{(c)}$\;
                Compute $Q_{\ell}^{(c+1)} = Q_{m}^{(c)}V_{\ell}^{(c)}$ and $T_{\ell}^{(c+1)} = (V_{\ell}^{(c)})^{\Ttran}T_{m}^{(c)}V_{\ell}^{(c)}$\;
                Set $c=c+1$ and $j=\ell$\;
            }
            Update $Q_{j+1}^{(c)}=[Q_{j}^{(c)},q_{i+1}]$ and $j=j+1$\;
            
        }
    \Return{Ritz values/vectors associated with the smallest $k$ eigenvalues of $T_{m}^{(c)}$\;}
\end{algorithm2e}
\begin{remark}
    Several remarks about \cref{alg:LC} are in order:
    \begin{itemize}
        \item Both lines~4 and 5 require the product of $A$ with the last vector $q_{i}$ in the Lanczos basis. Thus, one matrix-vector multiplication with $A$ is required in each loop. 
        \item Line~6 collects the coefficients $\alpha_{i}$ and $\beta_{i}$ from the Lanczos process computed in line~4. Instead, one could also use the $(j,j-1)$ and $(j,j)$ entries of $T_{j}^{(c)}$, that is, the corresponding coefficients \emph{after} reorthogonalization with fill-in, but this only makes a minor difference; see also~\cite[p.~350]{Stewart2001}.
        \item In \cref{eq:Tcompression}, we assumed that $(V_{\ell}^{(0)})^{\Ttran} e_{m} = e_{\ell}$ in order to preserve sparsity in the $(\ell+1)$th column and row of $T_{m}^{(1)}$. However, when reorthogonalization with fill-in is performed afterwards, this sparsity will be destroyed anyway. Therefore, in line~9, we can choose $V_{\ell}^{(c)}$ as an arbitrary orthonormal basis of its range, without necessarily enforcing $(V_{\ell}^{(c)})^{\Ttran} e_{m} = e_{\ell}$
        \item The number of vectors $\ell$ retained after compression in line 9 may vary in the course of the algorithm, as it depends on the output $V_{\ell}$ of \cref{alg:second_pole_strategy}.
        \item The memory required by the algorithm is dominated by the need for storing $Q_{m}^{(c)}$ and $q_{i+1}$, which contain $(m+1)n$ entries in total. As explained
        in~\cref{sec:reorth}, there is no need to store the coefficients $b_{m}$ and $F_{m}$ of the Krylov-like decomposition~\cref{eq:LCD}.
        \item After exiting from line~8, we compute the (partial) spectral decomposition $T_{m}^{(c)}S_{k}^{(c)}=S_{k}^{(c)}\Theta_{k}^{(c)}$, where $\Theta_{k}^{(c)}$ contains the smallest $k$ eigenvalues of $T_{m}^{(c)}$.  Then we obtain the Ritz values from the diagonal entries of $\Theta_{k}^{(c)}$ and the Ritz vectors from the columns of $Q_{m}^{(c)}S_{k}^{(c)}$.
    \end{itemize}
\end{remark}

\subsubsection{Checking convergence} \label{sec:checkconvergence}

Line~8 of~\cref{alg:LC} checks the convergence of the computed Ritz values/vectors and exits the loop once $k$ converged Ritz vectors have been found.
In the following, we develop a suitable convergence criterion, based on an existing approach for Lanczos.

\paragraph{Checking convergence in standard Lanczos}

In the standard Lanczos method, it is common practice to check the residual for declaring convergence. Letting $W_{\lan,k}\in\R^{\lan\times k}$ contain orthonormal eigenvectors belonging to the smallest $k$ eigenvalues of $T_{\lan}$, the matrix $U_{\lan}=Q_{\lan}W_{\lan,k}\in\R^{n\times k}$ contains the corresponding Ritz vectors and the diagonal matrix
$U_{\lan}^{\Ttran}AU_{\lan}$ contains the corresponding Ritz values.
Thanks to the Lanczos decomposition~\cref{eq:lanD}, the residual norm for these Ritz value/vector pairs satisfies
\begin{equation} \label{eq:reslan}
    \norm{AU_{\lan}-U_{\lan}U_{\lan}^{\Ttran}AU_{\lan}} = \norm{AQ_{\lan}W_{\lan,k}-Q_{\lan}T_{\lan}W_{\lan,k}} = \abs{\beta_{\lan}} \norm{e_{\lan}^{\Ttran}W_{\lan,k}}.
\end{equation} 
Thus, convergence can be checked inexpensively, by inspecting $\abs{\beta_{\lan}} \norm{e_{\lan}^{\Ttran}W_{\lan,k}}$.

\paragraph{Checking convergence in Lanczos with compression (line~8 of \cref{alg:LC})}

In principle, the expression~\cref{eq:reslan} easily extends 
to Lanczos with compression by replacing the Lanczos decomposition with the Krylov-like decomposition \cref{eq:LCD}. However, this would require us to explicitly store the matrix $F_{m}$, essentially doubling storage cost.
To avoid this, we use the coefficients $\boldsymbol\alpha_{i}$ and $\boldsymbol\beta_{i}$, collected in line~6 of \cref{alg:LC}, to form the tridiagonal matrix $T_{i}\in\R^{i\times i}$ that matches the matrix $T_{\lan}$ from~\cref{eq:lanD1} generated by the underlying Lanczos process. We then compute the eigenvectors $W_{i,k}$ corresponding to the smallest $k$ eigenvalues of $T_{i}$. In accordance with~\cref{eq:reslan}, we approximate the residual norm for Lanczos with compression by the quantity $\abs{\beta_{i}}\norm{e_{i}^{\Ttran}W_{i,k}}$ \edit{without evaluating the matrix $F_{m}$}. 
In particular, one exits the loop of \cref{alg:LC} once $\abs{\beta_{i}}\norm{e_{i}^{\Ttran}W_{i,k}}\leq \tol_{\mathrm{res}}$ holds.

Our numerical experiments consistently show (see \cref{figexperr} below) that the described residual approximation strategy is reliable and effective. 
In fact, the results in~\cref{sec:convergence} below show that, in terms of Ritz values, standard Lanczos and Lanczos with compression have a similar convergence behavior (in exact arithmetic) up to the level of $\tol_{\mathrm{ra}}^{2}$. This suggests that the convergence of these two methods occurs nearly simultaneously, particularly the residual norm for standard Lanczos method should be small when Lanczos with compression converges.

\paragraph{Choice of $\tol_{\mathrm{ra}}$}

The eigenvalue approximation error of a Ritz value satisfies a bound~\cite[Chap.~4, Thm.~2.17]{Stewart2001}  that is proportional to the squared residual norm, which is approximately bounded by $\tol_{\mathrm{res}}^{2}$.
On the other hand, \cref{thmcon} below shows that
the difference between the Ritz values of the Lanczos method and Lanczos with compression is bounded, up to a constant, by $\tol_{\mathrm{ra}}^{2}$. 
Thus, it is reasonable to choose $\tol_{\mathrm{ra}} \approx \tol_{\mathrm{res}}$. 
In practice, we recommend choosing $\tol_{\mathrm{ra}}$ slightly smaller than $\tol_{\mathrm{res}}$. For example, in our numerical experiments in \cref{sec:numericalresults}, we set  $\tol_{\mathrm{ra}} = 10^{-6}$ for computing one or four eigenvalues of discrete Laplacian operators, and $\tol_{\mathrm{ra}}=10^{-7}$ for computing several (up to hundreds of) eigenvalues for matrices from density function theory. Because $\ell$ depends at most logarithmically on $\tol_{\mathrm{ra}}^{-1}$, this does not significantly increase computational cost and helps avoiding potential imbalances due to the involved constants.

\section{Convergence analysis} \label{sec:convergence} 

This section studies the impact of compression on the convergence of the Ritz values returned by~\cref{alg:LC} as well as its consequences for the complexity. 
Suppose that \cref{alg:LC} performs $\lan$ matrix-vector multiplications in total. 
Assuming that $U$ contains the orthonormal basis of Ritz vectors returned by \cref{alg:LC}, the quantity $\Tr(U^{\Ttran}AU)-\sum_{i=1}^{k}\lambda_{i}$ measures the approximation error of the corresponding Ritz values. Recalling the Lanczos decomposition defined in \cref{eq:lanD}, this error can be decomposed as 
\begin{equation}
    \label{eq:decompErr}
    \Tr(U^{\Ttran}AU)-\sum_{i=1}^{k}\lambda_{i} = \Bigl(\Tr(U^{\Ttran}AU)-\sum_{i=1}^{k}\lambda_{i}(T_{\lan})\Bigr)
    +\sum_{i=1}^{k}\bigl(\lambda_{i}(T_{\lan})-\lambda_{i}\bigr),
\end{equation} 
where  $\lambda_{i}(T_{\lan})$ denotes the $i$th smallest eigenvalue of $T_{\lan}$.
Because of $\range (U)\subset \mathcal{K}_{\lan}(A,q_{1})$, the Cauchy interlacing theorem implies that both terms in the right-hand side are non-negative.
The second term captures the convergence of the standard Lanczos method, which is well understood for the case $k = 1$; see, e.g.,~\cite{Saad1980, Parlett1998}. The analysis for the case $k > 1$
is more involved and assumes that all the first $k+1$ eigenvalues of $A$ are simple~\cite{kressner2024randomized}.

It remains to analyze the first term in \cref{eq:decompErr}, which corresponds to the loss from compression. 
The following analysis of this term disregards the effect of roundoff error.
In related work \cite{casulli2025lanczos} that proposed and analyzed a compressed Lanczos method for solving Lyapunov matrix equations, Paige's analysis~\cite{Paige1980} was used to incorporate the effects of roundoff error. However, it is unclear how to perform an analogous analysis  for eigenvalue computation.

\subsection{Loss from compression}

Suppose that we perform $\compress$ compressions in total and let $V^{(c)}_{\ell}$ denote the orthonormal matrix used in the $c$th compression, and $P_{c}=\diagM{V^{(c)}_{\ell},I}$ for $c=0,\dotsc,\compress-1$. To quantify the effect of compressions on the Ritz values, we recursively define $T^{(c)}$ as
\begin{equation}
    \label{eq:defT^}
    T^{(0)}=T_{\lan} \quad\text{and}\quad  T^{(c+1)} = P_{c}^{\Ttran}T^{(c)}P_{c}\quad \text{for }c=0,\dotsc,\compress-1,
\end{equation}
and let $W_{k}^{(c)}$ contain the orthonormal eigenvectors corresponding to the smallest $k$ eigenvalues of $T^{(c)}$. Then \cref{thm1} shows that the impact of each compression step on $W_{k}^{(c)}$ is bounded by
\begin{equation}
    \label{eq:normWkj}
    \norm{W_{k}^{(c)}-P_{c}P_{c}^{\Ttran}W_{k}^{(c)}} \leq 2 \cdot \tol_{\mathrm{ra}}.
\end{equation}
The following auxiliary lemma will allow us to take into account that $P_{c}P_{c}^{\Ttran}W_{k}^{(c)}$ is \emph{not} orthonormal.

\begin{lemma}
    \label{lem:nonOrthX}
    Let $T\in\R^{n\times n}$ be a symmetric matrix with eigenvalues $\mu_{1}\leq \dotsb \leq \mu_{n}$, and let $W_{k}=[w_{1},\dotsc,w_{k}]$ contain orthonormal eigenvectors associated with $\mu_1,\ldots,\mu_k$. For any matrix $X \in \R^{n\times k}$ such that $\norm{X-W_{k}}\leq \epsilon<1$, we have that
    \begin{equation*}
        \Rq_{T}(X)-\sum_{i=1}^{k}\mu_{i}\leq k(\mu_{n}-\mu_{1})\epsilon^{2},
    \end{equation*}
    where $\Rq_{T}(X)$ is the (scalar) Rayleigh quotient~\cite[Eq. (4.2)]{absil2002grassmann}  defined as 
\begin{equation*}
    \Rq_{T}(X) = \Tr\bigl((X^{\Ttran}X)^{-1}(X^{\Ttran}TX)\bigr).
\end{equation*}
\end{lemma}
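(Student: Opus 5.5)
The plan is to exploit that $\Rq_T(X)$ depends only on $\range(X)$, and then reduce to a computation in terms of principal angles between $\range(X)$ and $\range(W_k)$.

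\emph{Shift.} Since $\Rq_{T-\mu_1 I}(X)=\Rq_T(X)-k\mu_1$, and the right-hand side of the claim is shift-invariant, we may assume $\mu_1=0$. Then $T$ is positive semidefinite and $\mu_n$ plays the role of $\mu_n-\mu_1$.

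\emph{Invariance.} Note that $\Rq_T(XG)=\Rq_T(X)$ for every invertible $G\in\R^{k\times k}$.

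\emph{Coordinates.} Let $W_\perp$ contain the remaining orthonormal eigenvectors, so $T=W_k\Lambda_kW_k^\Ttran+W_\perp\Lambda_\perp W_\perp^\Ttran$. Write $X=W_kA+W_\perp B$ with $A=W_k^\Ttran X$. Then $\norm{A-I}=\norm{W_k^\Ttran(X-W_k)}\le\epsilon<1$, so $A$ is invertible. Set $Y=XA^{-1}=W_k+W_\perp C$ with $C=BA^{-1}$. This gives
\[
Y^\Ttran Y=I+C^\Ttran C,\qquad Y^\Ttran TY=\Lambda_k+C^\Ttran\Lambda_\perp C .
\]

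\emph{Splitting the trace.} With $M=(I+C^\Ttran C)^{-1}$ we have $\Rq_T(X)=\Tr(M\Lambda_k)+\Tr(MC^\Ttran\Lambda_\perp C)$. Because $0\preceq M\preceq I$ and $\Lambda_k\succeq0$ after the shift, the first term is at most $\Tr(\Lambda_k)=\sum_{i=1}^k\mu_i$. For the second term, $\Lambda_\perp\preceq\mu_nI$ gives
\[
\Tr(MC^\Ttran\Lambda_\perp C)\le\mu_n\Tr\bigl(C^\Ttran C(I+C^\Ttran C)^{-1}\bigr)=\mu_n\sum_{i=1}^k\frac{\sigma_i^2}{1+\sigma_i^2},
\]
where $\sigma_i$ are the singular values of $C$. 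It therefore suffices to show that $\sigma_i^2/(1+\sigma_i^2)\le\epsilon^2$ for each $i$.

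\emph{Bounding the angles (main obstacle).} The naive estimate $\norm{C}\le\norm{B}\norm{A^{-1}}\le\epsilon/(1-\epsilon)$ is too weak. Instead, I will use that $\sigma_i=\tan\theta_i$, where $\theta_i$ are the principal angles between $\range(Y)=\range(X)$ and $\range(W_k)$. Hence $\sigma_i^2/(1+\sigma_i^2)=\sin^2\theta_i$.

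To justify this, let $Q_Y=Y(I+C^\Ttran C)^{-1/2}$, which is an orthonormal basis of $\range(X)$. Then $W_\perp^\Ttran Q_Y=C(I+C^\Ttran C)^{-1/2}$, whose singular values are $\sigma_i/\sqrt{1+\sigma_i^2}$. Consequently $\max_i\sin\theta_i=\norm{W_\perp^\Ttran Q_Y}$.

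Both subspaces have dimension $k$, so this largest sine is symmetric in the two subspaces. With $P_X$ denoting the orthogonal projector onto $\range(X)$, it equals $\norm{(I-P_X)W_k}$. Since $(I-P_X)X=0$,
\[
\norm{(I-P_X)W_k}=\norm{(I-P_X)(W_k-X)}\le\epsilon .
\]
Hence every $\sin^2\theta_i\le\epsilon^2$.

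\emph{Conclusion.} Combining the two bounds gives $\Rq_T(X)-\sum_{i=1}^k\mu_i\le k\mu_n\epsilon^2$. Undoing the shift turns $\mu_n$ into $\mu_n-\mu_1$, which is the claim.
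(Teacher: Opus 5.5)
Your proof is correct. Up to the final estimate it is the paper's proof in different notation. The paper also writes $X=(W_k+E)W_k^\Ttran X$ with $E=W_\perp W_\perp^\Ttran X(W_k^\Ttran X)^{-1}$ (your $W_\perp C$), uses homogeneity of $\Rq_T$, and subtracts $\mu_1$. It then discards $-\Tr\bigl((I+E^\Ttran E)^{-1}E^\Ttran E(D_\mu-\mu_1 I)\bigr)\le 0$, which is your step $\Tr(M\Lambda_k)\le\Tr(\Lambda_k)$. This reduces the claim to $\norm{E}^2\le\epsilon^2/(1-\epsilon^2)$, which is exactly your $\sigma_i^2/(1+\sigma_i^2)\le\epsilon^2$.

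The genuine difference is how you prove this key estimate. The paper stays purely algebraic. From $\norm{(W_k^\Ttran X-I)y}^2+\norm{W_\perp^\Ttran Xy}^2\le\epsilon^2\norm{y}^2$ and a Cauchy--Schwarz (completing-the-square) step, it derives $\norm{W_\perp^\Ttran Xy}^2\le\epsilon^2(1-\epsilon^2)^{-1}\norm{W_k^\Ttran Xy}^2$ directly. You instead read $\sigma_i^2/(1+\sigma_i^2)$ as squared sines of principal angles. You then combine the symmetry $\norm{(I-W_kW_k^\Ttran)P_X}=\norm{(I-P_X)W_kW_k^\Ttran}$ for equal-dimensional subspaces with $(I-P_X)X=0$.

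The paper's route is self-contained, whereas yours imports a standard but unproved fact about principal angles. In exchange, yours is more transparent. Because you keep the sum $\sum_i\sigma_i^2/(1+\sigma_i^2)$ rather than $k$ times its largest term, it also gives the sharper bound $\Rq_T(X)-\sum_{i=1}^k\mu_i\le(\mu_n-\mu_1)\norm{X-W_k}_F^2$ with no extra effort. In the Frobenius norm, the symmetry is a one-line trace identity: $\norm{(I-W_kW_k^\Ttran)P_X}_F^2=k-\Tr(P_XW_kW_k^\Ttran)=\norm{(I-P_X)W_kW_k^\Ttran}_F^2$. This variant therefore avoids the spectral-norm fact altogether, and $\norm{X-W_k}_F^2\le k\epsilon^2$ recovers the lemma as stated.
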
 
\begin{proof}
    Let $W_{\perp}$ be an orthonormal basis of the orthogonal complement of the range of $W_{k}$. Then we know that
    \begin{equation}
    \label{eq:nonorthX1}
        \Bignorm{\begin{bmatrix}
            W_{k}^{\Ttran}X-I\\ 
            W_{\perp}^{\Ttran}X
        \end{bmatrix}}=
        \Bignorm{\begin{bmatrix}
            W_{k}&W_{\perp}
        \end{bmatrix}\begin{bmatrix}
            W_{k}^{\Ttran}X-I\\ 
            W_{\perp}^{\Ttran}X
        \end{bmatrix}}
        =\norm{X-W_{k}}\leq \epsilon,
    \end{equation}
    implying that $W_{k}^{\Ttran}X$ is nonsingular.
    Setting $E=W_{\perp}W_{\perp}^{\Ttran}X(W_{k}^{\Ttran}X)^{-1}$, the matrix $X$ admits the decomposition 
    \begin{equation*}
        X = W_{k}W_{k}^{\Ttran}X+W_{\perp}W_{\perp}^{\Ttran}X = (W_{k}+E)W_{k}^{\Ttran}X.
    \end{equation*}
    Note that $W_{k}^{\Ttran} E = W_{k}^{\Ttran} T E  = 0$.
Together with the homogeneity of Rayleigh quotient, see \cite[Prop.~4.1]{absil2002grassmann}, this implies
    \begin{equation*}
        \Rq_{T}(X) = \Rq_{T}(W_{k}+E) = \Tr\bigl((I+E^{\Ttran}E)^{-1}(D_{\mu}+E^{\Ttran}TE)\bigr),
    \end{equation*} 
    where $D_{\mu}=\diagM{\mu_{1},\dotsc,\mu_{k}}$.
    Then
    \begin{equation*}
        \begin{aligned}
            \Rq_{T}(X)-\sum_{i=1}^{k}\mu_{i} 
            &= \Tr\bigl((I+E^{\Ttran}E)^{-1}(D_{\mu}+E^{\Ttran}TE)-D_{\mu}\bigr)\\
            &= \Tr\bigl((I+E^{\Ttran}E)^{-1}(D_{\mu}+E^{\Ttran}TE-D_{\mu}-E^{\Ttran}ED_{\mu})\bigr)\\ 
            &= \Tr\Bigl((I+E^{\Ttran}E)^{-1}\bigl(E^{\Ttran}(T-\mu_{1}I)E-E^{\Ttran}E(D_{\mu}-\mu_{1}I)\bigr)\Bigr)\\ 
            &\leq \Tr\Bigl((I+E^{\Ttran}E)^{-1}\bigl(E^{\Ttran}(T-\mu_{1}I)E\bigr)\Bigr)\\
            &\leq k\norm{T-\mu_{1}I}\norm{E(I+E^{\Ttran}E)^{-1/2}}^{2}\\ 
            &= k(\mu_{n}-\mu_{1})\norm{(I+E^{\Ttran}E)^{-1}E^{\Ttran}E},
        \end{aligned}
    \end{equation*}
    where the first inequality uses the fact that the trace of $E^{\Ttran}E(D_{\mu}-\mu_{1}I)$, a product of two symmetric positive semi-definite matrices, is non-negative.
    By~\cref{eq:nonorthX1}, we have that $\norm{(W_{k}^{\Ttran}X-I)y}^{2}+\norm{W_{\perp}^{\Ttran}Xy}^{2}\leq \epsilon^{2}\norm{y}^{2}$ holds for any $y\in\R^{k}$. Thus,
    \begin{equation*}
        \norm{E}^{2} = \norm{W_{\perp}^{\Ttran}X(W_{k}^{\Ttran}X)^{-1}}^{2} =  \sup_{y\neq 0}\frac{\norm{W_{\perp}^{\Ttran}Xy}^{2}}{\norm{W_{k}^{\Ttran}Xy}^{2}} \leq \sup_{y\neq 0}\frac{\epsilon^{2}\norm{y}^{2}-\norm{(W_{k}^{\Ttran}X-I)y}^{2}}{\norm{W_{k}^{\Ttran}Xy}^{2}}.
    \end{equation*}
    By the Cauchy--Schwarz inequality,
    \begin{equation*}
        \epsilon^{2}\norm{y}^{2}-\norm{(W_{k}^{\Ttran}X-I)y}^{2} = (\epsilon^{2}-1)\norm{y}^{2}-\norm{W_{k}^{\Ttran}Xy}^{2}+2y^{\Ttran}W_{k}^{\Ttran}Xy\leq \frac{\epsilon^{2}}{1-\epsilon^{2}} \norm{W_{k}^{\Ttran}Xy}^{2},
    \end{equation*}
    implying that $\norm{E}^{2}\leq \epsilon^{2}/(1-\epsilon^{2})$.
    The lemma is proved by 
    \begin{equation*}
        \norm{(I+E^{\Ttran}E)^{-1}E^{\Ttran}E} = \norm{I-(I+E^{\Ttran}E)^{-1}}\leq 1-\frac{1}{\norm{E}^{2}+1} \leq \epsilon^{2}.
    \end{equation*}
\end{proof}

Using~\cref{lem:nonOrthX}, the following result establishes that the error on the Ritz values caused by compression on the Ritz values is controlled by the squared tolerance for rational approximation, up to some constant.
\begin{theorem}
    \label{thmcon}
    Given a symmetric matrix $A$ with eigenvalues in 
    the (gapped) increased ordering~\eqref{eq:ordereigenvalues}, and an initial vector $q_{1}$ with $\norm{q_{1}}=1$, let $T_{\lan}$ be the tridiagonal matrix in the Lanczos decomposition \cref{eq:lanD}.
    Let $U\in\R^{n\times k}$ contain the Ritz vectors return by Lanczos with compression (with the same initial vector $q_{1}$) after $\compress$ compression and $\lan$ matrix-vector multiplications. Then  
    \begin{equation*}
        \Tr(U^{\Ttran}AU)-\sum_{i=1}^{k}\lambda_{i}(T_{\lan})\leq 4k\compress(\lambda_{n}-\lambda_{1})\tol_{\mathrm{ra}}^{2},
    \end{equation*}
    where $0<\tol_{\mathrm{ra}}< 1/2$ is a uniform upper bound for the tolerance of rational approximation in \cref{con:ratapp}.
\end{theorem}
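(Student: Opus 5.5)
We will prove the bound by a telescoping argument over the $\compress$ compression steps, using the matrices $T^{(c)}$ from~\cref{eq:defT^}. Throughout we work in exact arithmetic. The first task is to identify $\Tr(U^{\Ttran}AU)$ with the sum of the $k$ smallest eigenvalues of $T^{(\compress)}$. Since compressions act only on the leading block and later Lanczos vectors are unchanged (\cref{prop:compression}), the final basis is $Q_{\lan}P_0P_1\cdots P_{\compress-1}$ restricted to its leading columns. Its Rayleigh quotient matrix is then a leading principal submatrix of $T^{(\compress)}$ (or equals it, after padding with identity blocks). By Cauchy interlacing, the sum of the $k$ smallest Ritz values returned is at most $\sum_{i=1}^k\lambda_i(T^{(\compress)})$; with matching dimensions it is equal. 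So it suffices to show
\[
\sum_{i=1}^k\lambda_i(T^{(\compress)})-\sum_{i=1}^k\lambda_i(T_{\lan})\le 4k\compress(\lambda_n-\lambda_1)\tol_{\mathrm{ra}}^2 .
\]

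We then telescope, writing $S_c=\sum_{i=1}^k\lambda_i(T^{(c)})$ and bounding each increment $S_{c+1}-S_c\le 4k(\lambda_n-\lambda_1)\tol_{\mathrm{ra}}^2$. Fix $c$ and set $X=P_cP_c^{\Ttran}W_k^{(c)}$. By~\cref{eq:normWkj}, $\norm{X-W_k^{(c)}}\le 2\tol_{\mathrm{ra}}=:\epsilon<1$; this is where $\tol_{\mathrm{ra}}<1/2$ is used. Applying \cref{lem:nonOrthX} with $T=T^{(c)}$ gives
\[
\Rq_{T^{(c)}}(X)-S_c\le k\bigl(\mu_{\max}(T^{(c)})-\mu_{\min}(T^{(c)})\bigr)\,4\tol_{\mathrm{ra}}^2 .
\]
Since every $T^{(c)}$ is a compression $\mathcal{P}^{\Ttran}A\mathcal{P}$ with $\mathcal{P}$ orthonormal, its spectrum lies in $[\lambda_1,\lambda_n]$, so the spread is at most $\lambda_n-\lambda_1$.

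Next we relate $\Rq_{T^{(c)}}(X)$ to $S_{c+1}$. Write $X=P_cY$ with $Y=P_c^{\Ttran}W_k^{(c)}$, which has full column rank because $X$ does by the lemma's proof. Since $P_c^{\Ttran}P_c=I$,
\[
\Rq_{T^{(c)}}(X)=\Tr\bigl((Y^{\Ttran}Y)^{-1}Y^{\Ttran}P_c^{\Ttran}T^{(c)}P_cY\bigr)=\Rq_{T^{(c+1)}}(Y).
\]
By Ky Fan's trace minimization principle, this quantity is at least $S_{c+1}$. Hence $S_{c+1}-S_c\le 4k(\lambda_n-\lambda_1)\tol_{\mathrm{ra}}^2$, and summing over $c=0,\dotsc,\compress-1$ gives the claim.

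The main obstacle is applying~\cref{eq:normWkj}, that is, \cref{thm1}, at stage $c$. Stage $c$ must have the structure of \cref{thm:BKCS}: $T^{(c)}$ has a leading block equal to the current $T_m^{(c)}$, coupled to the remaining tridiagonal Lanczos part only through a single entry $\beta e_me_1^{\Ttran}$. We also need the rational approximation condition~\cref{con:ratapp} on $[\lambda_1(T^{(c)}),\theta_m]$. We will verify the block structure by induction, using that $P_c$ leaves the trailing identity block untouched and that $e_m\in\range(V_\ell^{(c)})$. The rational condition will come from the construction in \cref{sec:Vell} together with interlacing, which gives $\lambda_1(T^{(c)})\ge\lambda_1$. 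The bookkeeping of dimensions when the final basis is shorter than $T^{(\compress)}$ is handled by the interlacing remark above.
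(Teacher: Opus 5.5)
Your core argument is the paper's proof. You telescope over the $\compress$ compressions and apply \cref{lem:nonOrthX} to $X=P_cP_c^{\Ttran}W_k^{(c)}$ with $\epsilon=2\tol_{\mathrm{ra}}<1$. You then use $\Rq_{T^{(c)}}(X)=\Rq_{T^{(c+1)}}(P_c^{\Ttran}W_k^{(c)})\ge\sum_{i=1}^k\lambda_i(T^{(c+1)})$ (Courant--Fischer/Ky Fan) and bound the spectral spread of $T^{(c)}$ by $\lambda_n-\lambda_1$. Your plan to check, stage by stage, the single-entry coupling required by \cref{thm:BKCS} is more explicit than the paper, which simply invokes \cref{thm1}.

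\textbf{Incorrect step.} The interlacing remark in your first paragraph has the inequality pointing the wrong way. Suppose the Rayleigh quotient matrix $B$ of the final basis were a proper leading principal submatrix of $T^{(\compress)}$. Cauchy interlacing would then give $\lambda_i(B)\ge\lambda_i(T^{(\compress)})$. The returned Ritz sum would therefore be at least $\sum_{i=1}^k\lambda_i(T^{(\compress)})$, not at most, and your chain of inequalities would break. Consequently, your closing sentence, which hands the dimension bookkeeping to this remark, does not work.

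\textbf{Why it does no harm.} The problematic case never arises. After $\lan$ matvecs, the basis used for Ritz extraction has $q_{\lan}$ as its last column. By \cref{prop:compression} and $e_m\in\range(V_\ell^{(c)})$, it coincides exactly with $Q_{\lan}P_0\cdots P_{\compress-1}$. Hence $\Tr(U^{\Ttran}AU)=\sum_{i=1}^k\lambda_i(T^{(\compress)})$ holds with equality, which is what the paper uses. Replace the interlacing remark by this identification. If the dimensions genuinely differed, you would have to redefine $T^{(0)}$ to have matching size rather than appeal to interlacing.
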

\begin{proof}
    Recall the definition of $P_{c}^{\Ttran}W_{k}^{(c)}$ in \cref{eq:defT^}, the condition $0< \tol_{\mathrm{ra}}< 1/2$ and \cref{eq:normWkj} yield that $P_{c}^{\Ttran}W_{k}^{(c)}$ is (column) full rank.
    By the Courant--Fischer minimax theorem~\cite[Thm.~8.1.2]{Golub2013}, 
    \begin{equation*}
        \sum_{i=1}^{k}\lambda_{i}(T^{(c+1)}) \leq \Rq_{T^{(c+1)}}(P_{c}^{\Ttran}W_{k}^{(c)}) = \Rq_{T^{(c)}}(P_{c}P_{c}^{\Ttran}W_{k}^{(c)}).
    \end{equation*}
    Using \cref{eq:normWkj} again and applying \cref{lem:nonOrthX} to $T=T^{(c)}$, $W_{k}=W_{k}^{(c)}$ and $X=P_{c}P_{c}^{\Ttran}W_{k}^{(c)}$ yields that 
    \begin{equation*}
        \begin{aligned}
            &\sum_{i=1}^{k}\bigl(\lambda_{i}(T^{(c+1)})-\lambda_{i}(T^{(c)})\bigr) \leq \Rq_{T^{(c)}}(P_{c}P_{c}^{\Ttran}W_{k}^{(c)})-\sum_{i=1}^{k}\lambda_{i}(T^{(c)})\\  
        \leq&\  4k\bigl(\lambda_{\max}(T^{(c)})-\lambda_{\min}(T^{(c)})\bigr)\tol_{\mathrm{ra}}^{2}.        
        \end{aligned}
    \end{equation*}
    Using Cauchy interlacing, we immediately obtain the loss of Ritz value after one compression as 
    \begin{equation*}
        \sum_{i=1}^{k}\bigl(\lambda_{i}(T^{(c+1)})-\lambda_{i}(T^{(c)})\bigr) \leq
        4k\bigl(\lambda_{\max}(T^{(c)})-\lambda_{\min}(T^{(c)})\bigr)\tol_{\mathrm{ra}}^{2}\leq 4k(\lambda_{n}-\lambda_{1})\tol_{\mathrm{ra}}^{2}.
    \end{equation*}
    Applying this inequality recursively and noting that 
    \begin{equation*}
        \Tr(U^{\Ttran}AU) = \sum_{i=1}^{k}\lambda_{i}(T^{(\compress)})\quad \text{and}\quad \lambda_{i}(T_{\lan}) = \lambda_{i}(T^{(0)}),
    \end{equation*}
    we finish the proof.
\end{proof}

\subsection{Complexity analysis of orthogonalization}

In this part, we present a back-of-the-envelope complexity analysis for the cost of (re)orthogonalization. Again, we ignore the effects of roundoff error. While roundoff may delay convergence, our numerical experiments indicate that this delay is often not significant.

For the Lanczos method without restarting or compression, we expect that up to $\iter_{\lan} \sim k / \sqrt{\gap_{k}}$ iterations are required to attain a fixed tolerance in the first $k$ eigenvalues. As mentioned above, this is a standard result for $k = 1$~\cite{Saad1980, Parlett1998}, while the corresponding result for $k > 1$ requires additional assumptions~\cite{kressner2024randomized}.
When full reorthogonalization is applied, the total number of operations required for orthogonalization scales as
\begin{equation*}
    \comp_{\lan} \sim n\cdot \iter^2_{\lan} \sim n k^2 / \gap_{k}.
\end{equation*}
In particular, a small relative gap may imply slow convergence and, consequently, higher orthogonalization costs.

According to the convergence result of \cref{thmcon}, Lanczos with compression requires essentially the same number of iterations as standard Lanczos, that is, $\iter_{\lan}\sim k / \sqrt{\gap_{k}}$, at least up to the level of the compression error.
However, in every iteration, we only need to orthogonalize with respect to the compressed basis, containing at most $m = \widehat{k} + \ell$ vectors. As discussed in \cref{sec:Vell}, the degree of rational approximation depends on the gap logarithmically, implying that $m\sim k+\log(1/\gap_{k})$.
Hence, the cost of orthogonalization reduces to 
\begin{equation*}
    \comp_{\lc} \sim n (m\cdot \iter_{\lan})  \sim n k\bigl(k + \log(1/\gap_{k})\bigr) / \sqrt{\gap_{k}}\sim nk^{2} / \sqrt{\gap_{k}}.
\end{equation*}
For small relative gaps, $0<\gap_{k}\ll 1$, we therefore expect a reduction of orthogonalization costs by up to a factor of roughly $1/\sqrt{\gap_{k}}$.

\section{Numerical results} \label{sec:numericalresults}
In this section, we compare~\cref{alg:LC}, Lanczos with compression (LC), with the Krylov--Schur method (KS). While Matlab's \texttt{eigs} (as of version R2017b) is based on KS, we found that the choice of the restart length in \texttt{eigs} can sometimes be suboptimal in terms of matrix-vector products (see discussion in~\cref{sec:kspara} below) and we therefore used our own implementation of KS\footnote{All numerical experiments in this section have been implemented in Matlab 2022b and were carried out on an AMD Ryzen~9 6900HX Processor (8 cores, 3.3--4.9 GHz) and 32 GB of RAM. Scripts to reproduce numerical results are publicly available at \url{https://github.com/nShao678/LanczosWithCompression-code}.}.
Both implementations, KS and LC, employ Gaussian random initial vectors and full reorthogonalization. The eigenvalues computed by Matlab's \texttt{eigs} (with default tolerance $10^{-14}$) are used as a reference when computing eigenvalue approximation errors.
The computational cost is measured in terms of the number of matrix-vector multiplications (referred to as matvecs). In turn, the improvement of LC over KS is quantified by the ratio
\begin{equation}
    \label{defimprovement}
    \text{Improvement}\defi 1-\frac{\text{$\#$matvecs required by LC}}{\text{$\#$matvecs required by KS}}.
\end{equation}
Since the impact of deflation on the number of matvecs is negligible, we perform both methods without deflation in all experiments.

\subsection{Experiments for Laplacian eigenvalue problem}

In this part, we consider a discretized 2D-Laplacian eigenvalue problem
\begin{equation*}
    -\Delta_{n}U_{k}=U_{k}\Lambda_{k},
\end{equation*}
where $\Delta_{n} \in \R^{n\times n}$ is a discretized Laplace operator on an $L$-shaped domain, generated by the Matlab command \texttt{A = 3/4*nx\^{}2*delsq(numgrid(`L',2+nx))}. We aim to compute an orthonormal basis $U_{k} \in \R^{n\times k}$ consisting of the eigenvectors corresponding to the smallest $k$ eigenvalues $\lambda_1\le \cdots \le \lambda_k$ of $-\Delta_{n}$. 
The maximum dimension of Krylov subspace before restarting is set to $m=60$ for both methods. 
The tolerance of rational approximation in LC is set to $\tol_{\mathrm{ra}}=10^{-6}$, except in \cref{sec:numexpCompressErr}.
Letting $\mu_1 \le \cdots \le \mu_k$ denote the $k$ smallest Ritz values obtained by LC or KS, we measure their relative error using
\begin{equation} \label{eq:errorritz}
\sum_{i=1}^{k}(\mu_{i}-\lambda_{i}) \,\bigg/\, \sum_{i=1}^{k}\lambda_{i}. 
\end{equation}

\subsubsection{Comparison with KS} \label{sec:kspara}

\paragraph{Restart length in KS}

In KS, one needs to fix a restart length $\ell$, that is, the number of Ritz vectors retained after restarting. In \texttt{eigs} (of Matlab 2022b), the choice of $\ell$ differs when computing one versus several eigenvalues. When computing a single eigenvalue ($k = 1$), $\ell$ is set to half the dimension of the Krylov subspace before restarting, that is, $\ell = m/2 = 30$ in our case. However, when computing $k > 1$ eigenvalues, $\ell$ is dynamically chosen between $k$ and $2k$, depending on the number of converged Ritz vectors.
To examine the influence of choosing different, fixed values for $\ell$, \cref{figexpLap,tabexpLap} present numerical results obtained when choosing
$n = 67\,500$ and varying $\ell$ between $k$ and $40$. Both KS and LC are terminated when 
the relative error~\eqref{eq:errorritz} is below a prescribed tolerance $\mathsf{tol}$. These results indicate that $\ell = m/2$ is a reasonable choice for both $k = 1$ and $k = 4$, at least in terms of the number of matvecs. 
In the remaining experiments, we therefore always choose $\ell=m/2$ for KS. Note that, even with this tuned choice for KS, 
LC continues to achieve a 4\%--7\% improvement relative to KS.

\begin{figure}[htbp]
    \centering
    \subfloat[Computing one eigenvalue]{
    \includegraphics[width=\figsizeD]{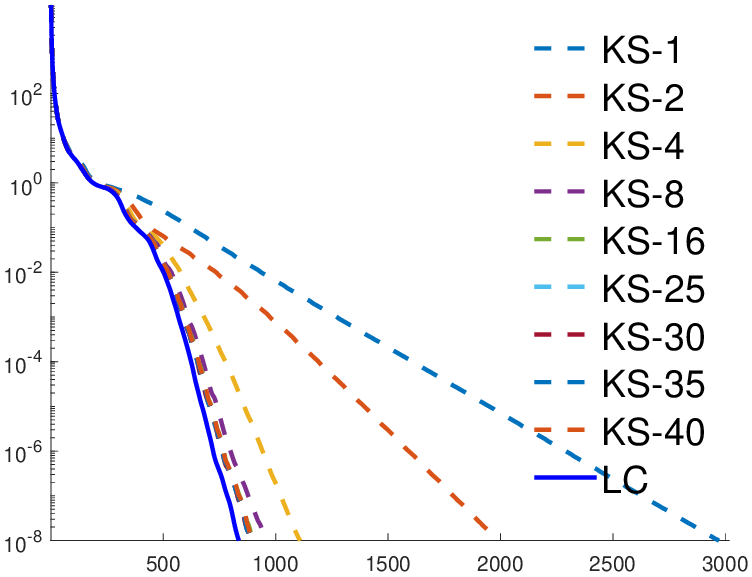}}
    \subfloat[Computing four eigenvalues]{
    \includegraphics[width=\figsizeD]{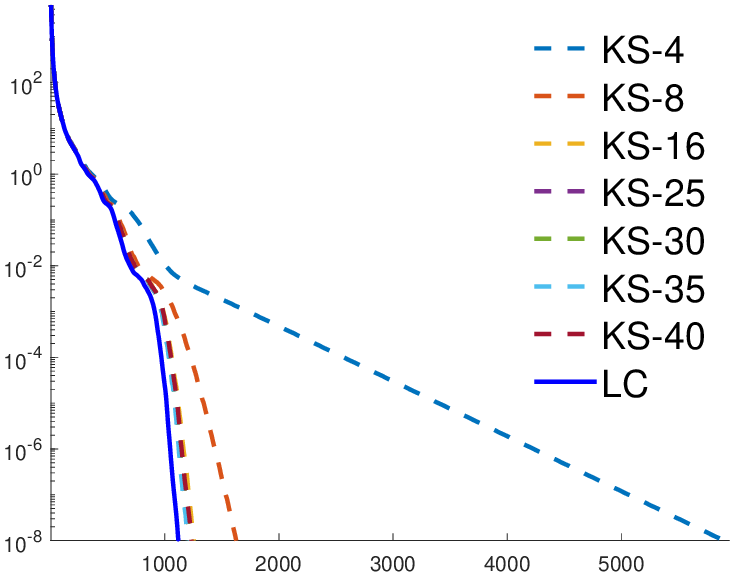}}
    \caption{Convergence history of Laplacian eigenvalue problems. The X-axis represents the number of matvecs, and the Y-axis represents the relative error~\cref{eq:errorritz} of the Ritz values for $k=1$ (left plot) and $k = 4$ (right plot). The maximum dimension of the Krylov subspace is $m = 60$. KS-$\ell$ denotes KS with $\ell$ Ritz vectors retained after restarting.}
    \label{figexpLap}
\end{figure}

\begin{table}[htbp]
    \centering
    \caption{$\#$matvecs required by KS and LC to ensure a relative eigenvalue approximation error~\cref{eq:errorritz} below $\mathsf{tol}$ for a Laplacian eigenvalue problem. The improvement~\cref{defimprovement} is computed for the minimal $\#$matvecs required by KS-$\ell$ (for all choice of $\ell$ in \cref{figexpLap}). The minimum value was always in 
    $\{25,30,35\}$ and is marked in bold.}
    \resizebox{\textwidth}{!}{
    \begin{tabular}{lcccccccccc}
        \toprule
          & \multicolumn{5}{c}{Computing $k = 1$ eigenvalue}  & \multicolumn{5}{c}{Computing $k = 4$ eigenvalues} \\
          \cmidrule(lr){2-6} \cmidrule(lr){7-11}
        $\tol$ & $10^{-4}$ & $10^{-5}$ & $10^{-6}$ & $10^{-7}$ & $10^{-8}$ & $10^{-4}$ & $10^{-5}$ & $10^{-6}$ & $10^{-7}$ & $10^{-8}$ \\
        \midrule
        KS-25  & 653  & \bf 707  & \bf 761  & \bf 830  & 890  & 1043 & 1098 & 1135 & 1176 & 1223 \\
        KS-30  & \bf 652  & \bf 707  & 764  & 831  & \bf 888  & 1041 & 1095 & 1132 & 1175 & 1219 \\
        KS-35  & 653  & 708  & 765  & 834  & 889  & \bf 1036 & \bf 1088 & \bf 1123 & \bf 1160 & \bf 1202 \\
        LC   & 625  & 673  & 722  & 785  & 837  & 971  & 1016 & 1048 & 1084 & 1119 \\
        \midrule
        Improvement & 4.1\% & 4.8\% & 5.1\% & 5.4\% & 5.7\% & 6.3\% & 6.6\% & 6.7\% & 6.6\% & 6.9\% \\
        \bottomrule
        \end{tabular}}
    \label{tabexpLap}
\end{table}

\paragraph{Behavior for increasing matrix size}

For $n$ ranging from $7\,500$ to $750\,000$, we report the number of required matvecs and the corresponding improvement, as defined in \cref{defimprovement}, for the relative errors of Ritz values falling below various tolerance levels in \cref{figlargen}.
This experiment shows that the performance of LC continues to improve as the matrix size $n$ increases.

\begin{figure}[htbp]
    \centering
    \subfloat[Computing one eigenvalue]{
    \includegraphics[width=\figsizeD]{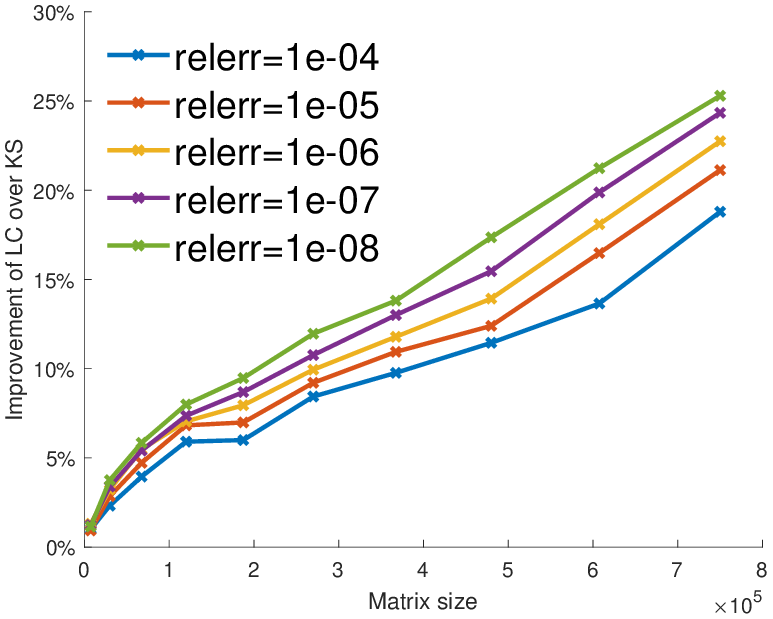}}
    \subfloat[Computing four eigenvalues]{
    \includegraphics[width=\figsizeD]{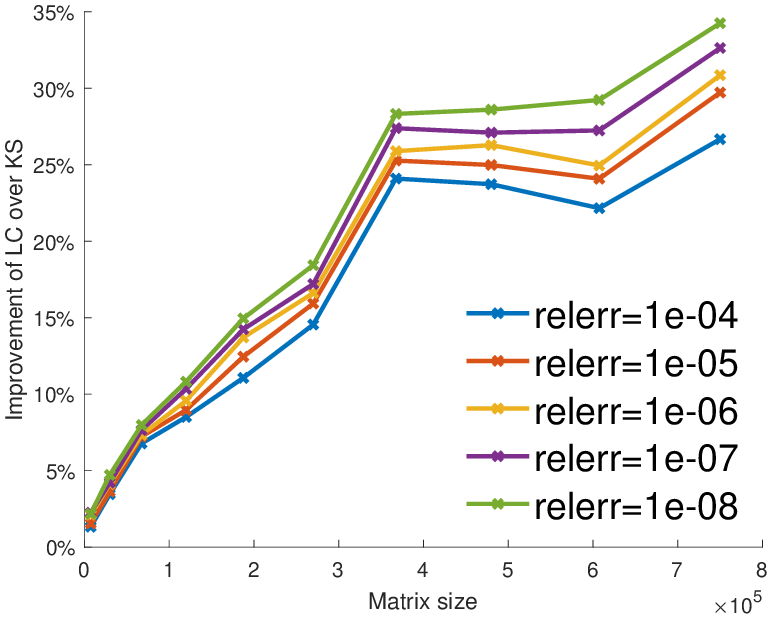}}
    \caption{Improvement of LC over KS, as defined in~\cref{defimprovement} as the size $n$ of discretized Laplacian eigenvalue  problem increases. The maximum dimension of the Krylov subspace is set to $60$.}
    \label{figlargen}
\end{figure}

\paragraph{Robustness with respect to random initial vectors}
At the end of this part, we show that the improvement is robust with respect to the choice of initials vectors. To verify that the results are not sensitive to 
the particularly chosen Gaussian random initial vector, we take $n=67\,500$ and $\ell=m/2=30$, and run KS and LC for $100$ trials of Gaussian random initial vectors. The resulting statistics are collected in \cref{tabStatGaussian}. 
All these experiments suggest that, for computing Laplacian eigenvalue problems on an $L$-shaped domain, the behavior of LC in terms of matvecs is never worse, always better, and sometimes significantly better than KS.
\begin{table}[htbp]
    \centering
    \caption{Improvement statistics for repeated trials with Gaussian random initial vectors. SD and CI stands for standard deviation and confidence intervals, respectively.}
    \label{tabStatGaussian}
    \resizebox{\textwidth}{!}{
    \subfloat[Computing $k = 1$ eigenvalue]{
    \begin{tabular}{cccc}
        \toprule 
        $\tol$ & mean & SD & $95\%$ CI \\ \midrule
        $10^{-4}$ & 3.87\% & 0.62\% & $[3.75\%, 3.99\%]$\\  
        $10^{-5}$ & 4.37\% & 0.54\% & $[4.26\%, 4.48\%]$ \\ 
        $10^{-6}$ & 4.88\% & 0.45\% & $[4.79\%, 4.97\%]$\\ 
        $10^{-7}$ & 5.31\% & 0.43\% & $[5.23\%, 5.40\%]$\\ 
        $10^{-8}$ & 5.63\% & 0.40\% & $[5.56\%, 5.71\%]$ \\ \bottomrule 
    \end{tabular}}
    \subfloat[Computing $k = 4$ eigenvalues]{
    \begin{tabular}{cccc}
        \toprule  
        $\tol$ & mean & SD & $95\%$ CI \\ \midrule
        $10^{-4}$ & 6.28\% & 0.43\% & $[6.19\%, 6.36\%]$ \\  
        $10^{-5}$ & 6.65\% & 0.42\% & $[6.57\%, 6.74\%]$ \\  
        $10^{-6}$ & 7.01\% & 0.39\% & $[6.93\%, 7.09\%]$\\  
        $10^{-7}$ & 7.31\% & 0.37\% & $[7.24\%, 7.38\%]$\\ 
        $10^{-8}$ & 7.62\% & 0.35\% & $[7.55\%, 7.69\%]$  \\ \bottomrule 
    \end{tabular}}}
\end{table}

\subsubsection{Approximation of residual}

In order to check convergence, we recall from \cref{sec:resapp} that the residual of LC is approximated by the residual of the standard Lanczos method (without restarting).
We validate the quality of this approximation by demonstrating that the behavior of these residuals is very similar in numerical examples. Taking $n = 67\,500$, we compute either the smallest eigenvalue or the smallest four eigenvalues.
In this experiment, we compute the residual (defined in \cref{eq:reslan}) for standard Lanczos and LC after each matvec, and report the results in \cref{figexperr}.
It is clear that in both cases LC closely follows the standard Lanczos method in terms of Ritz values and residuals. Thus, the approximation of the residual is reliable.

\begin{figure}[htbp]
    \centering

    \subfloat[$\mu_{1}-\lambda_{1}$ and $\sum_{i=1}^{4}(\mu_{i}-\lambda_{i})$]{
    \includegraphics[width=\figsizeF]{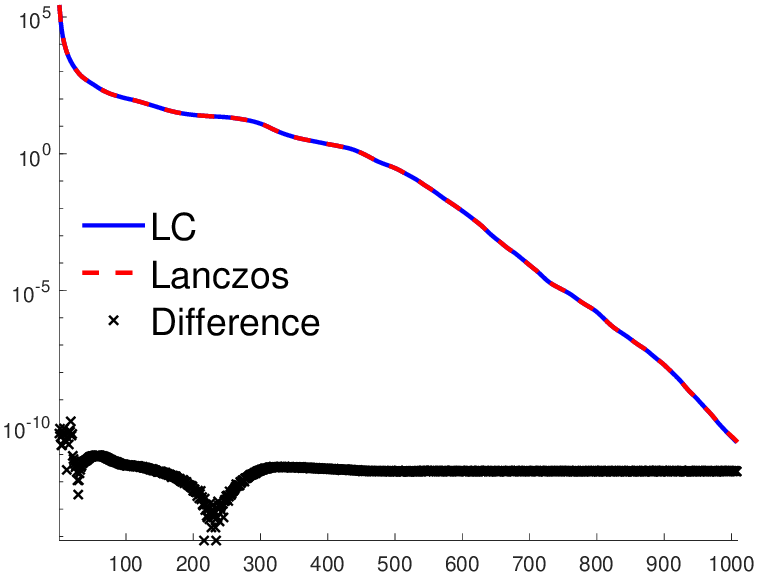}
    \includegraphics[width=\figsizeF]{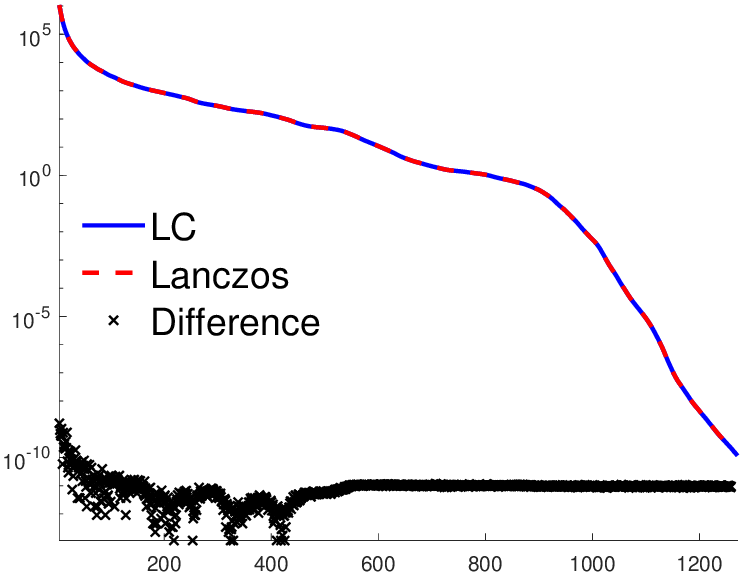}}
    \subfloat[Residual]{
    \includegraphics[width=\figsizeF]{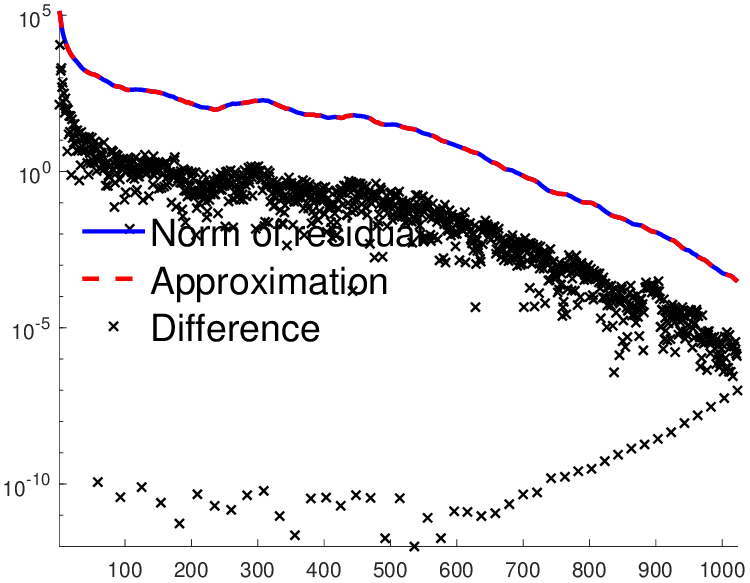}
    \includegraphics[width=\figsizeF]{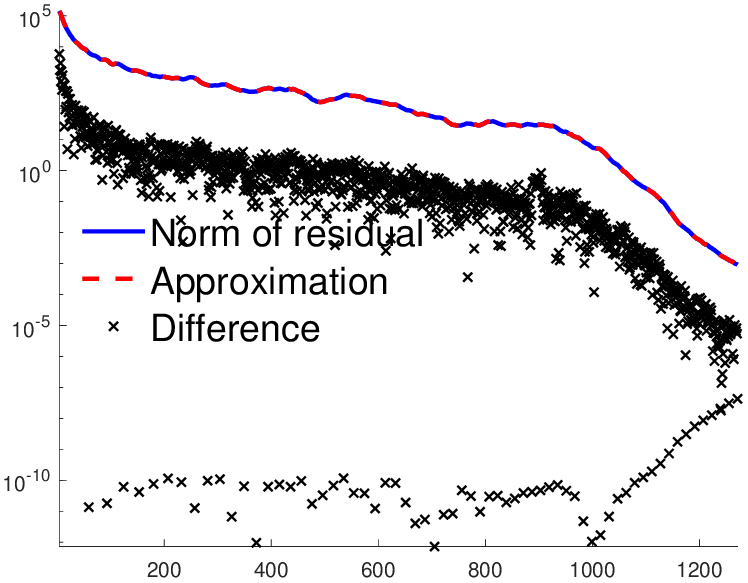}}
    \caption{Convergence history of Laplacian eigenvalue problems. 
    The X-axis represents the number of matvecs, while the Y-axis shows, for (a), the error of the Ritz values and, for (b), the norm of the residuals, for computing $k = 1$ eigenvalue (left) or $k = 4$ eigenvalues (right). The maximum dimension of the Krylov subspace for LC is set to $60$. The Lanczos method is implemented without restarting or reorthogonalization. The residual is computed after each matvec of LC.}
    \label{figexperr}
\end{figure}
\subsubsection{Influence of compression error}
\label{sec:numexpCompressErr}

\cref{thmcon} establishes that the accuracy of the Ritz values depends quadratically on the compression error $\tol_{\mathrm{ra}}$. We have verified this result through numerical experiments. Fixing $n=67\,500$ and varying $\tol_{\mathrm{ra}} = 10^{-i}$ for $i=1,\dotsc,5$, we record the convergence histories of the relative error in the Ritz values in \cref{fig:exptol}. 
The numerical results indicate that a large compression error leads to stagnation in the convergence of the Ritz values. This stagnation can be eliminated by employing a higher-quality rational compression (smaller $\tol_{\mathrm{ra}}$). Moreover, in this example, the observed dependence of the accuracy on the compression error is actually better than quadratic and appears to be nearly cubic.

\begin{figure}[htbp]
    \centering
    \subfloat[Computing $k = 1$ eigenvalue]{
    \includegraphics[width=\figsizeD]{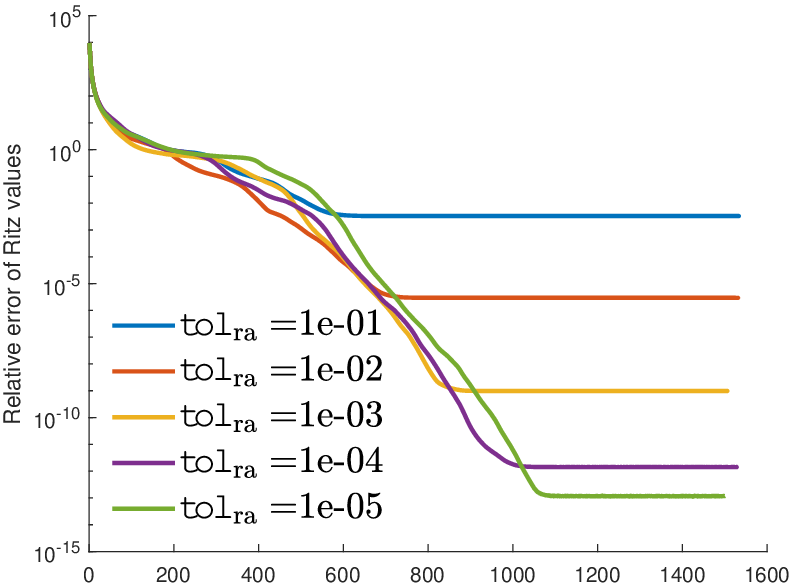}}
    \subfloat[Computing $k = 4$ eigenvalues]{
    \includegraphics[width=\figsizeD]{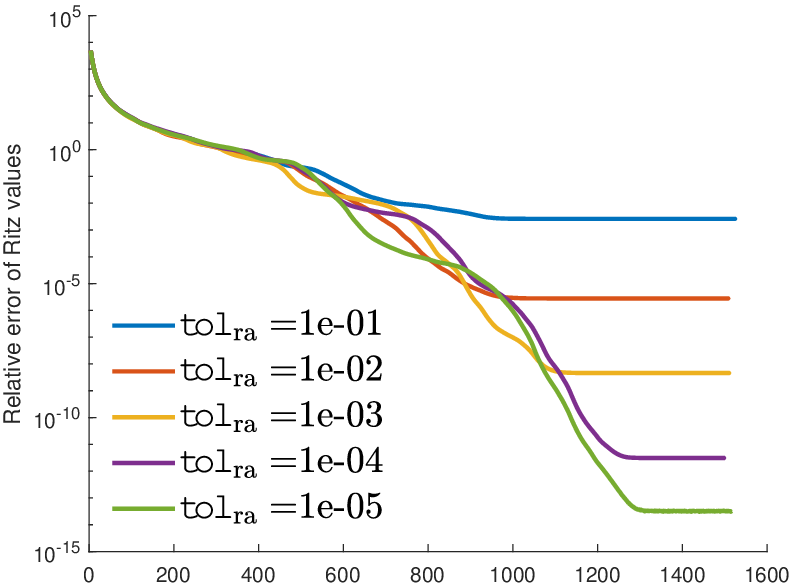}}
    \caption{Convergence histories of LC with different compression errors, when setting the maximum dimension of the Krylov subspace to $60$. The X-axis represents the number of matvecs.}
    \label{fig:exptol}
\end{figure}

\subsection{Test for matrices from density functional theory}
In this part, we select several matrices in \cite{Davis2011} originating from density functional theory to compare LC with KS.
In density functional theory, the number of eigenvalues of interest corresponds to the number of occupied states, as detailed for the test matrices in \cref{matnameDFT}.

\begin{table}[htbp]
    \centering
    \caption{Statistics on matrices from density functional theory}
    \label{matnameDFT}
    \begin{tabular}{crrr}
    \toprule
        name & size & nnz & \#occupied states\\ \midrule
        \texttt{H2O} & 67\,024&	2\,216\,736& 4	\\ 
        \texttt{GaAsH6} & 61\,349 &	3\,381\,809 &7	\\ 
        \texttt{SiO2} & 155\,331& 11\,283\,503& 8\\ 	 
        \texttt{Si5H12} & 19\,896 &	738\,598 & 16\\ 
        \texttt{Ga10As10H30} &  113\,081 & 6\,115\,633 & 55 \\
        \texttt{Si34H36}& 97\,569 & 5\,156\,379& 86\\
        \bottomrule
    \end{tabular}
\end{table}

Throughout this part, the tolerance for rational approximation is set to $\tol_{\mathrm{ra}} = 10^{-7}$. For the maximum dimension of the Krylov subspace in both methods, we set $m = \max\{80, 4k\}$, where $k$ is the number of occupied states. After restarting, we retain $\ell = m/2$ Ritz vectors for KS. Note that most matrices in \cref{matnameDFT} are indefinite. We examine the absolute error of the Ritz values $\sum_{i=1}^{k}(\mu_{i}-\lambda_{i})$, where $\mu_{i}$ and $\lambda_{i}$ are the $i$-th smallest Ritz value and eigenvalue of $A$, respectively.
(By Cauchy interlacing, each term is always nonnegative, that is, $\mu_{i}-\lambda_{i}\geq 0$.) The convergence histories are collected in \cref{figexpDFT}.

Although the same tolerance for rational approximation is applied to all test matrices, the accuracy of the Ritz values obtained from LC can vary significantly across different matrices, ranging from machine precision to $10^{-7}$. 
In particular, a stagnation happens when computing a large number of eigenvalues, such as $55$ for \texttt{Ga10As10H30} and $86$ for \texttt{Si34H36}.
We observed that this stagnation cannot be fixed by using a higher-quality compression.
It is important to note that stagnation is caused by the influence of roundoff error on convergence; this does not contradict the backward stability. 
This phenomenon indicates that the fill-in of $T_{m}$ destroys the rank-one structure of its off-diagonal block in \cref{thm:BKCS}, which is needed to achieve high accuracy.

\begin{figure}[htbp]
    \centering
    \subfloat[\texttt{H2O}(4)]{
    \includegraphics[width=\figsizeT]{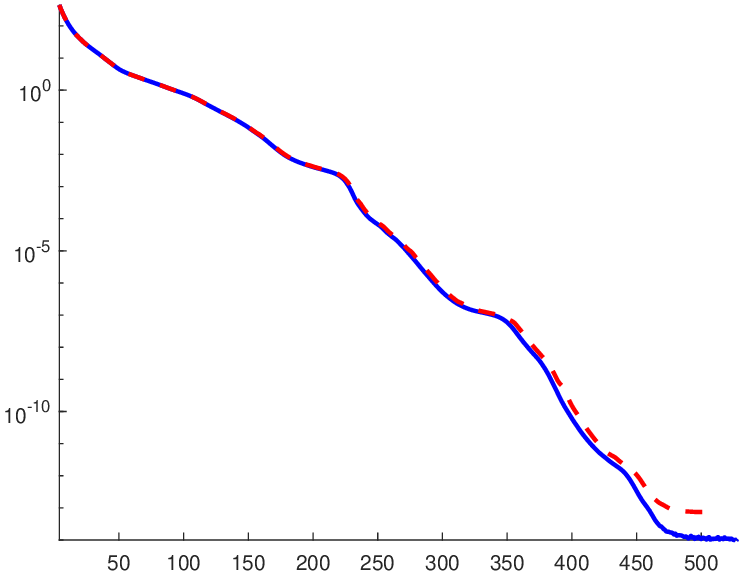}}
    \subfloat[\texttt{GaAsH6}(7)]{
    \includegraphics[width=\figsizeT]{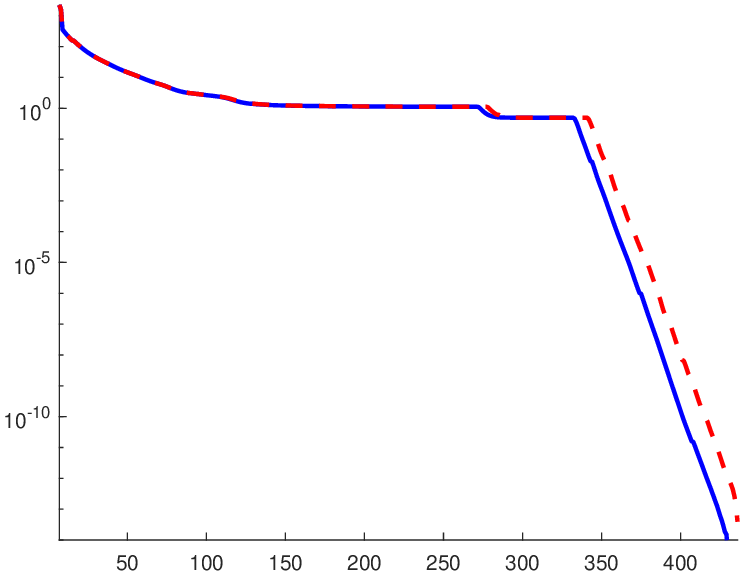}}
    \subfloat[\texttt{SiO2}(8)]{
    \includegraphics[width=\figsizeT]{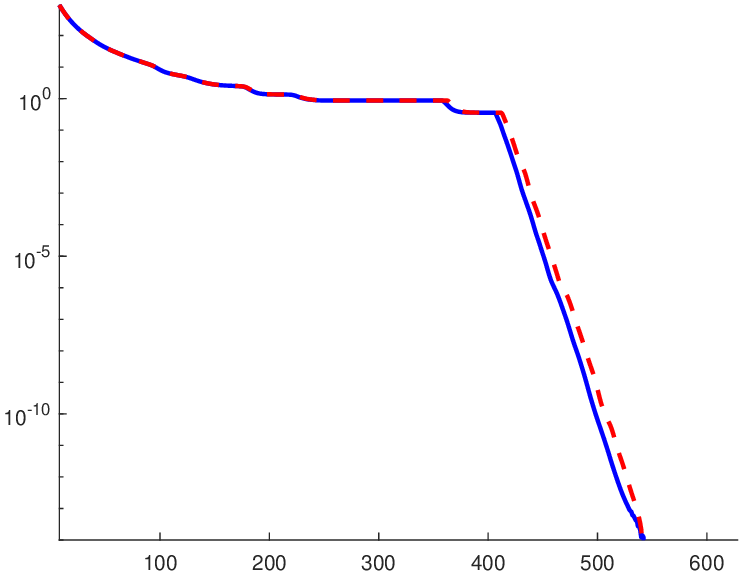}}

    \subfloat[\texttt{Si5H12}(16)]{
    \includegraphics[width=\figsizeT]{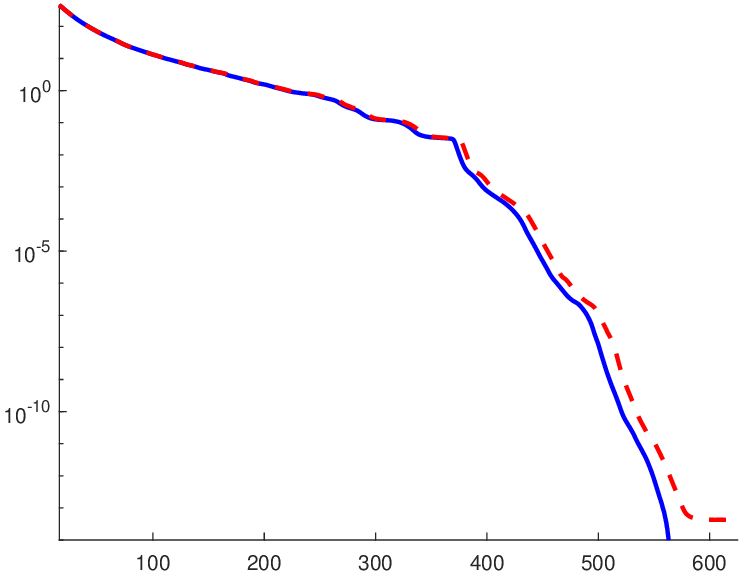}}
    \subfloat[\texttt{Ga10As10H30}(55)]{
    \includegraphics[width=\figsizeT]{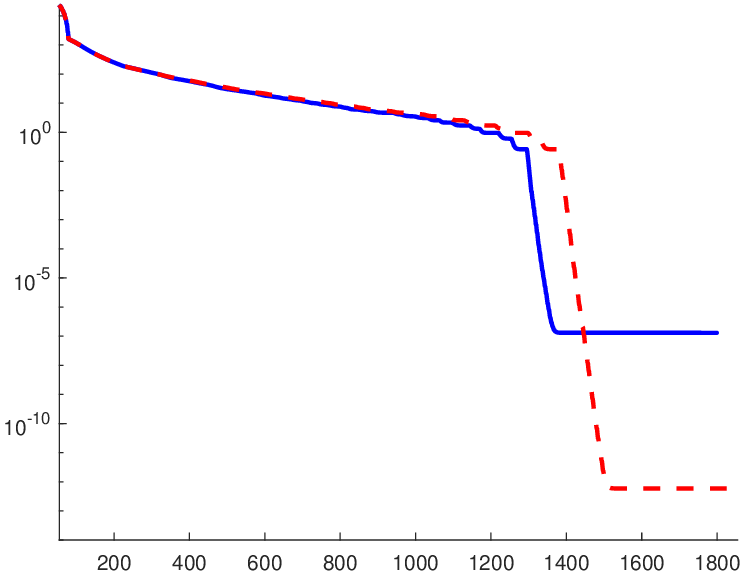}}
    \subfloat[\texttt{Si34H36}(86)]{
    \includegraphics[width=\figsizeT]{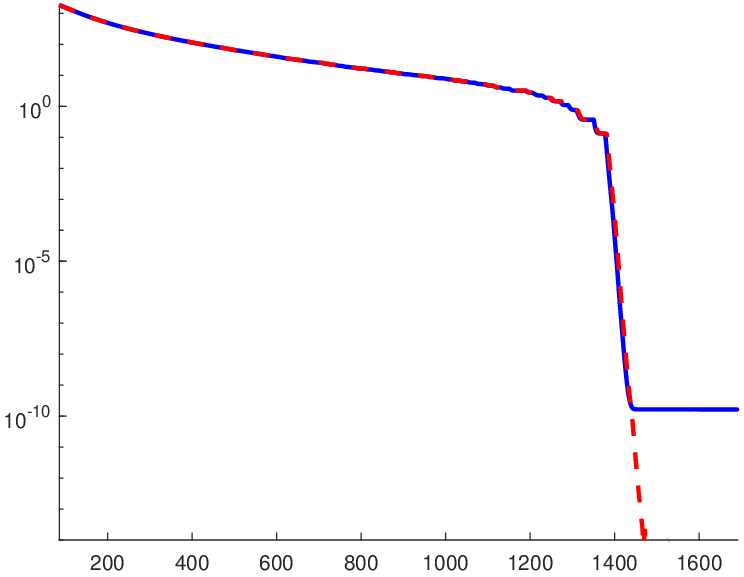}}
    
    \caption{Convergence history of matrices from the density functional theory calculation. The X-axis represents the number of matvecs, while the Y-axis shows the absolute error of Ritz values $\sum_{i=1}^{k}(\mu_{i}-\lambda_{i})$. The blue lines correspond to LC, while the red dashed lines represent KS.
    The number $k$ indicated in each subtitle in the figure is the number of occupied states. The maximum dimension of the Krylov subspace for both KS and LC is set to $m = \max\{80, 4k\}$.}
    \label{figexpDFT}
\end{figure}

\begin{figure}[!htbp]
    \centering
    \includegraphics[width=\figsizeD]{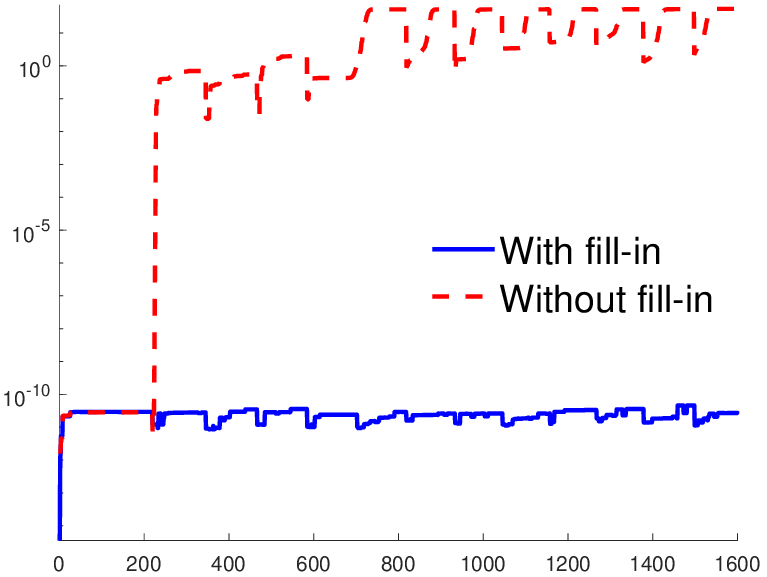}
    \includegraphics[width=\figsizeD]{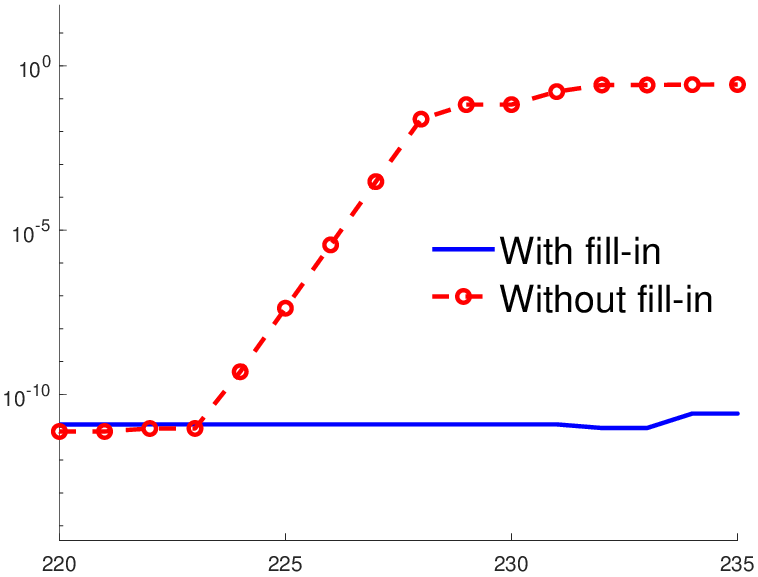}
    \caption{$\norm{Q_{j}^{\Ttran}AQ_{j}-T_{j}}$ \vs\ $\#$matvecs. Right panel is a zoomed-in view for the behavior after the first compression. The test matrix is \texttt{Ga10As10H30}, with the smallest $55$ eigenvalues being of interest. The maximum dimension of the Krylov subspace before compression is set to $220$.}
    \label{figexpDFTreorth}
\end{figure}

In the Lanczos method and KS, backward stability requires that $Q_{m}$ has nearly orthonormal columns.
An additional requirement on the orthogonality between $Q_{m+1}$ and $F_{m}$ arises in the Krylov-like decomposition \cref{eq:LCD}.  
To address this, we propose a new reorthogonalization scheme with fill-in in \cref{sec:reorth}.  
The following numerical experiment demonstrates that LC is not backward stable when using the classical Gram--Schmidt process with reorthogonalization (CGS2) alone.  
Using the matrix \texttt{Ga10As10H30} as a test case, we report the error $\norm{Q_{j}^{\Ttran}AQ_{j}-T_{j}}$ for reorthogonalization with and without fill-in in \cref{figexpDFTreorth}. It is evident that the scheme presented in \cref{sec:reorth} effectively preserves backward stability, whereas CGS2 loses accuracy immediately after the first compression.

\section{Conclusion}

In this paper, we have proposed a novel strategy for compressing the Krylov subspace in symmetric eigenvalue problems. Theoretical analysis shows that the proposed compression introduced only negligible error relative to the standard Lanczos method, while achieving a more favorable (re)orthogonalization complexity. Numerical experiments show its potential of outperforming the standard implicit restarting strategy. An important direction for future work is to address and overcome the stagnation observed in the numerical experiments.

\section*{Acknowledgments}

While working on the manuscript, the first author was a member of the Gran Sasso Science Institute, L'Aquila, and 
the research group INdAM-GNCS. The first author has been supported by 
the National Research Project (PRIN) ``FIN4GEO: Forward and Inverse Numerical Modeling of hydrothermal systems in volcanic regions with application to geothermal energy exploitation''.
\edit{The authors thank the referees for helpful comments and insights.}

\bibliographystyle{siamplain}

\end{document}